\newtheorem{definition}{Definition}
\newtheorem{theorem}{Theorem}
\newtheorem{remark}{Remark}
\newtheorem{lemma}{Lemma}
\begin{document}


\centerline{ \large \bf On a class of generating vector fields}

\centerline{ \large \bf for the extremum seeking problem:}

\centerline{ \large \bf Lie
bracket approximation and stability properties$^{*}$}
\quad\\

\centerline{ \large Victoria Grushkovskaya$^{1,3}$, Alexander Zuyev$^{2,3}$, Christian Ebenbauer$^1$} 

\let\thefootnote\relax\footnote{$^{*}$This work was supported in part by the Alexander von Humboldt Foundation and the Deutsche Forschungsgemeinschaft (EB 425/4-1). Corresponding author V.~Grushkovskaya. }
\footnotetext[1]{$^1$Institute for Systems Theory and Automatic Control, University  of Stuttgart, 70569 Stuttgart, Germany, \{grushkovskaya, ce\}@ist.uni-stuttgart.de}
\footnotetext[2]{$^2$Max Planck Institute for Dynamics of Complex Technical Systems, 39106 Magdeburg, Germany, zuyev@mpi-magdeburg.mpg.de}
\footnotetext[3]{$^3$Institute of Applied Mathematics and Mechanics, National Academy of Sciences of Ukraine,  84100 Sloviansk, Ukraine}

{\centering \small Keywords:                           
Extremum seeking;  asymptotic stability; approximate gradient flows; Lie bracket approximation; vibrational stabilization.            
}                         

\begin{abstract}                          
In this paper, we describe a broad class of control functions for  extremum seeking problems. We show that it
unifies and generalizes existing extremum seeking  strategies
which are based on Lie bracket approximations, and allows to design
new controls with favorable properties in extremum seeking and
vibrational stabilization tasks. The second result of this paper is a novel approach for studying the asymptotic behavior of extremum seeking systems. It provides a constructive procedure for defining frequencies of control functions to ensure the
practical asymptotic and exponential stability.
In contrast to many known results, we also prove asymptotic and exponential stability in the sense of Lyapunov for the proposed class of extremum seeking systems under
appropriate assumptions on the vector fields.
\end{abstract}


\section{Introduction}
In many control applications, the goal is to operate a system in some optimal fashion. Often, however, the optimal operating point is unknown or may even change over  time so that it cannot be determined a priori. Extremum seeking control is a control
methodology to solve such problems of stabilizing and tracking an a priori unknown optimal operating point. Typically, it is model-free and minimizes or maximizes the steady-state map of a system. The steady-state map maps constant control input values to
the steady-state output values.
It is a well-defined map under appropriate assumptions on the system.
There exist many ways to design the extremum seeking strategies.
A classical perturbation-based approach is to use the controls consisting  of time-periodic oscillating inputs (often called dither, excitation, perturbation  or learning signal) and state-dependent  vector fields in order to gather information about the
unknown steady-state map.
Based on the perturbed input and the perturbed output response, typically the gradient
or other descent directions of the steady-state map are approximated or estimated by appropriate signal processing or filtering methods, see, e.g. \cite{Durr17,Due-Sta-Ebe-Joh-12,King12,Guay15,Guay03,Har16,Krst03,Kr00,Nes10,review}.
Hereby, the shape of control functions plays an important role
since it influences the  speed of convergence and may be subject to input constraints.
In the literature, different types of  excitation signals have been analyzed,
see, e.g. ~\cite{Chi07, Nes09, Sch14, Sch16,TNM08}.\\
In this paper, we propose a novel class of vector fields  for extremum seeking controls based on Lie bracket approximation techniques \cite{Due-Sta-Ebe-Joh-12,Durr17}.
The first contribution of this paper is a formula describing a whole class of vector fields for an extremum seeking system which allows to approximate a gradient flow in various ways. The formula unifies and generalizes previously known controls presented
in \cite{Due-Sta-Ebe-Joh-12,Sch14,Sch16,SD17} and allows to generate new extremum seeking strategies with desirable  properties.
In particular, we demonstrate benefits of this  formula by designing a control
which has bounded update rates \textit{and} vanishing amplitudes at the same time.\\
Moreover, the second contribution  is a rigorous proof of the asymptotic and exponential stability \textit{in the sense of Lyapunov},
under appropriate assumptions on the considered class of generating vector fields.
This is in contrast to many results in the literature, where typically
\textit{practical} stability results are established. The proof also extends the techniques developed in \cite{ZuSIAM,ZGB,GrZuNA,GrZuIEEE}
{to a wide class of cost functions and to systems whose vector fields are non-differentiable at the origin}.  An advantage of these techniques is the possibility to estimate the decay rate of solutions of the extremum
seeking systems. \\
Finally, we demonstrate that the proposed formula is not only of use in
extremum seeking but also in vibrational stabilization problems \cite{SK13,ME}.\\
The paper is organized as follows. Section~\ref{Prel} contains
some preliminary results on  extremum seeking  based on Lie bracket approximations.
In Section~\ref{Main}, we present a novel formula to approximate the gradient flows and establish various asymptotic stability conditions. In  Section~\ref{appl}, we illustrate several extremum seeking strategies by using numerical simulations, and
discuss the application of the obtained results to the vibration stabilization problem.  Appendix~\ref{proofs} contains  auxiliary lemmas and proofs.
\section{Preliminaries}~\label{Prel}
\subsection{Notations}
Throughout the text, $\mathbb R^+$  denotes the set of all non-negative real numbers,
 $B_\delta(x^*)$ is the $\delta$-neighborhood of $x^*{\in} \mathbb R^n$,  $\overline{B_\delta(x^*)}$ is its closure. For  $h{\in} C^1(\mathbb R^n;\mathbb R)$, $\xi{\in}\mathbb R^n$, 
we define the column  $\nabla h(\xi):=\frac{\partial h(x)}{\partial x}^T\Big|_{x=\xi}$. For a function $f:\mathbb R\to\mathbb R$,  $f(z)=O(z)$ as $z\to z^0$ means that there is a $c>0$ such that $|f(z)|\le c|z|$ in some neighborhood of $z^0$. { For  $f,g:\mathbb R^n\to\mathbb R^n $, $x\in\mathbb R^n$, we denote the Lie derivative as
 {$L_gf(x)=\lim\limits_{s\to0}\tfrac{f(x+sg(x))-f(x)}{s}$}, and  $[f,g](x)=L_fg(x)-L_gf(x)$ is the Lie bracket. For $m,n\in\mathbb Z$, the notation $i=\overline{m,n}$ means that $i=m,m+1,\dots,n$.}
 {For $a,b\in\mathbb R^n$, we denote their open convex hull as ${\rm co}\{a,b\}=\{\lambda a+ (1-\lambda)b\,|\,\lambda\in (0,1)\}$.}
\subsection{Lie bracket approximations}
Consider a control-affine system
\begin{equation}\label{aff}
  \dot x=f_0(x)+\sum_{j=1}^\ell f_j(x){\sqrt\omega} u_j({\omega} t),
\end{equation}
where $x{=}(x_1,\dots,x_n)^T{\in}\mathbb R^n$, $x(t_0){=}x^0{\in}\mathbb R^n$ (without loss of generality, we assume $t_0=0$), $\omega{>}0$,
$f_j{:\mathbb R^n\to\mathbb R^n}$, $j{=}\overline{1,\ell}$.
{Assume that:\\
A0 $u_j(t)$ are continuous  $T$-periodic functions, $\int_0^Tu_j(\tau)d\tau{=}0$,
$ \int_0^T\int_0^\theta  u_{i}(\theta) u_{j}(\tau)d\tau ds{{=}}\beta_{i,j} T$, $T{>}0$, $\beta_{i,j}\in\mathbb R$, $i,j=\overline{1,\ell}$}.\\
It can be shown that the trajectories of~\eqref{aff} approximate trajectories of
the following \textit{Lie bracket system}:
\begin{equation}\label{affLie}
    \dot {\bar x}{=}f_0(\bar x){+}\sum\limits_{i{<}j}{\beta_{j,i}}[f_i,f_j](\bar x),\quad\bar x(0)=x^0.
  \end{equation}
	The stability properties of systems~\eqref{aff} and \eqref{affLie} are related as follows.
  \begin{lemma}[\cite{Due-Sta-Ebe-Joh-12}]~\label{dthm}
{Let $f_0,f_i\in C^2(\mathbb R^n;\mathbb R^n)$, and $u_i$ satisfy A0,  $i=\overline{1,\ell}$.}  If a compact set $ S\subset\mathbb R^n$ is locally (globally) uniformly asymptotically stable for~\eqref{affLie} then it is locally (semi-globally) practically
uniformly asymptotically stable for~\eqref{aff}.
  \end{lemma}	
Below we recall the notion of practical stability.
	 \begin{definition}
A compact set   $S\subset\mathbb R^n$ is said to be {locally practically uniformly asymptotically stable} for~\eqref{aff} if:\\
-- it is {practically uniformly stable}, i.e. for every $\varepsilon{>}0$ there exist $\delta{>}0$ and $\omega_0{>}0$ such that, for all $t_0{\ge}0$ and $\omega{>}\omega_0$,  if $x^0{\in} B_{\delta}(S)$ then the corresponding solution of~\eqref{aff}
satisfies $x(t){\in}  B_{\varepsilon}(S)$ for all $t{\ge} t_0$;\\
-- $\hat\delta$-{practically uniformly attractive} with some $\hat\delta{>}0$, i.e. for every $\varepsilon{>}0$ there exist  $t_1{\ge}0$ and $\omega_0{>}0$ such that, for all $t_0{\ge}0$ and $\omega{>}\omega_0$,  if $x^0{\in} B_{\hat\delta}(S)$ then the
corresponding solution of~\eqref{aff} satisfies $x(t){\in} B_{\varepsilon}(S)$ for all $t{\ge} t_0{+}t_1$.\\
 If the attractivity property holds for every $\hat\delta{>}0$, then the set $S$ is called  {semi-globally practically uniformly asymptotically stable} for~\eqref{aff}.
\end{definition}
\subsection{Extremum seeking problem}
In this paper, we address a { class of extremum seeking problems related to the unconstrained minimization of a cost function $J$. 
 We assume that $J\in C{^2}(\mathbb R^n;\mathbb R)$ is unknown (as an analytic expression) but can be evaluated (measured)  at each $x \in \mathbb R^n.$ The goal is to construct a control system of the form}
$\displaystyle
 \dot x = u(t,J(x))
$
such that the (local) minima of $J$ have some desired stability properties for this system.
In this setup, a static map $J$ corresponds to the steady-state map of a system.
However, the extremum seeking based on Lie bracket approximations can be applied to much more general scenarios, including dynamic maps (dynamical systems), constrained optimization problems, distributed and multi-agent extremum seeking, stabilization,
synchronization and consensus problems as well as problems on manifolds, etc. 
The results obtained in this paper can be  applied to such more general problems but are not discussed here for the sake of simplicity. \\
The underlying idea of the extremum seeking based on the Lie bracket approximations is as follows.
Suppose that $n=1$, i.e. $x\in\mathbb R$, and consider the system
\begin{equation}\label{int1}
\dot x=J(x)\sqrt\omega\cos(\omega t)+\sqrt\omega\sin(\omega t).
\end{equation}
It can be seen that the Lie bracket system for~\eqref{int1} approximates the gradient flow of $J$:
\begin{equation}\label{int1_Lie}
\dot{\bar x}=[J(\bar x),1]=-\tfrac{1}{2}\nabla J(\bar x).
\end{equation}
Thus, the trajectories of system~\eqref{int1} approximate trajectories of the gradient flow of $J$ and
they converge, for example, if $J$ is convex and has minima, into an arbitrary small neighborhood of the set of minima of $J$,
for sufficiently large $\omega$.  For $n>1$, the gradient flow can be approximated in a similar way,
see \cite{Due-Sta-Ebe-Joh-12} for details.
\newpage
\section{Main results}~\label{Main}
\subsection{Vector fields for approximating gradient flows}~\label{sec_grad}
Observe that there are many ways to  define the vector fields of system~\eqref{aff}
such that 
the corresponding Lie bracket system has the form~\eqref{int1_Lie}. For example, consider the system
\begin{align}
\dot x=\tfrac{1}{2}e^{J(x)}\sqrt\omega\cos(\omega t)+e^{-J(x)}\sqrt\omega\sin(\omega t).
\end{align}
Computing $\big[\tfrac{1}{2}e^{J(x)},e^{-J(x)}\big]$ yields $-\nabla J(x)$ and, hence, the associated Lie bracket system is again of the form~\eqref{int1_Lie}.\\
The main idea and  the first main result of this paper is the description of a class of vector fields for system~\eqref{aff} such that the corresponding Lie bracket system~\eqref{affLie} represents a gradient-like flow of $J$.
Consider first the system
\begin{equation}\label{int}
\dot x=F_{1}(J(x)){\sqrt\omega }u_{1}(\omega t)+F_{2}(J(x)){\sqrt\omega} u_{2}(\omega t).
\end{equation}
 We begin with the one-dimensional case $x{\in}\mathbb R$ to simplify the presentation, and the multi-dimensional case will be considered later as an extension.
\begin{theorem}~\label{thm_appr}
Let the functions $F_{1},F_{2}\in C{^1}(\mathbb R; \mathbb R)$  satisfy
\begin{equation}\label{class}
F_2(z)={-}F_1(z)\int{\frac{F_0(z)}{F_1(z)^2}}dz,
\end{equation}
with some $F_{0}:\mathbb R{\to }\mathbb R$, 
and let $u_{s}: \mathbb R^+{\to}\mathbb R$ satisfy $A0$.
 Then the Lie bracket system for~\eqref{int} has the form
  \begin{equation}\label{int_Lie}
   \dot {\bar x}=-\beta_{2,1} \nabla J(\bar x) F_0(J(\bar x)).
 \end{equation}
\end{theorem}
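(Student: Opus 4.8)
The plan is to recognize \eqref{int} as an instance of the control-affine system \eqref{aff} with vanishing drift $f_0\equiv 0$ and the two vector fields $f_1(x)=F_1(J(x))$, $f_2(x)=F_2(J(x))$. Its associated Lie bracket system \eqref{affLie} then collapses to a single term, since the sum over $i<j$ with $i,j\in\{1,2\}$ contains only the pair $(1,2)$, giving $\dot{\bar x}=\beta_{2,1}[f_1,f_2](\bar x)$. The entire theorem thus reduces to computing this one scalar Lie bracket and matching it against $-\beta_{2,1}\nabla J(\bar x)F_0(J(\bar x))$.

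First I would evaluate the bracket in one dimension. From the definition $L_gf(x)=f'(x)g(x)$ in the scalar case, one has $[f_1,f_2](x)=f_1(x)f_2'(x)-f_2(x)f_1'(x)$. Applying the chain rule, $f_i'(x)=F_i'(J(x))\,J'(x)$, so the common factor $J'(x)=\nabla J(x)$ can be pulled out:
\begin{equation*}
[f_1,f_2](x)=\nabla J(x)\bigl(F_1(J(x))F_2'(J(x))-F_2(J(x))F_1'(J(x))\bigr).
\end{equation*}
It therefore suffices to prove the pointwise identity $F_1F_2'-F_2F_1'=-F_0$ on the range of $J$; the left-hand side is precisely the Wronskian of the pair $(F_1,F_2)$.

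The remaining step is to verify this identity using hypothesis \eqref{class}. Writing $\Phi(z):=\int F_0(z)/F_1(z)^2\,dz$, so that $\Phi'(z)=F_0(z)/F_1(z)^2$, condition \eqref{class} reads $F_2=-F_1\Phi$. Differentiating gives $F_2'=-F_1'\Phi-F_1\Phi'=-F_1'\Phi-F_0/F_1$, and substituting into the Wronskian the two terms carrying $\Phi$ cancel, leaving $F_1F_2'-F_2F_1'=-F_0$. Inserting this into the expression above and multiplying by $\beta_{2,1}$ yields exactly \eqref{int_Lie}.

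I do not anticipate a genuine obstacle: the computation is short, and \eqref{class} is nothing but the first-order linear ODE for $F_2$ (given $F_0,F_1$) whose solution forces the Wronskian to equal $-F_0$, so the cancellation is built into the hypothesis. The only points requiring minor care are the regularity $F_1,F_2\in C^1$ (together with $J\in C^2$) needed for $f_1,f_2$ to be $C^1$ and the bracket to be well defined, and the tacit nonvanishing of $F_1$ on the relevant interval so that the integrand $F_0/F_1^2$ in \eqref{class} makes sense; both are guaranteed by the statement.
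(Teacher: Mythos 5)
Your proposal is correct and follows essentially the same route as the paper: both reduce the claim to the single bracket term $\beta_{2,1}[F_1(J(\cdot)),F_2(J(\cdot))]$, factor out $\nabla J$ by the chain rule, and use the Wronskian identity $F_1F_2'-F_2F_1'=-F_0$, which the paper phrases as the linear ODE~\eqref{pfaff} whose solutions are~\eqref{class}, while you verify it by differentiating~\eqref{class} directly. The only caveat is your remark that nonvanishing of $F_1$ is ``guaranteed by the statement'': it is actually handled separately in the paper's Remark~1, which allows isolated zeros of $F_1$ and interprets~\eqref{class} by continuity there.
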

\textbf{PROOF.}
Consider the differential equation
\begin{equation}\label{pfaff}
F_2(z)\frac{dF_1(z)}{dz}-  F_1(z)\frac{dF_2(z)}{dz}=F_0(z),\quad z\in\mathbb R,
\end{equation}
{and observe that it represents a linear ordinary differential equation with respect to $F_2(z)$ (or $F_1(z)$), whose solutions are given by~\eqref{class}.}
Then, by computing the Lie bracket system for~\eqref{int}, we obtain~\eqref{int_Lie}:
\begin{align*}
& \dot {\bar x}{=}\tfrac{1}{T}[F_1(J(\bar x)),F_2(J(\bar x))]\int_0^T\int_0^\theta u_2(\theta) u_1(\tau)d\tau d\theta\\
   &{=}\beta_{2,1}\left(F_1(J(\bar x))\frac{d F_2(J(\bar x))}{d J}{-}F_2(J(\bar x))\frac{d F_1(J(\bar x))}{dJ}\right)\nabla J(\bar x) \\
   &{=}{-}\beta_{2,1}\nabla J(\bar x)F_0(J(\bar x)). \hspace{14em}\square
\end{align*}
Formula~\eqref{class} describes the whole class of solutions of~\eqref{pfaff}, and this is the unique way to obtain the Lie bracket system~\eqref{int_Lie} if the original system has the form~\eqref{int}. However, there are many ways to obtain the Lie bracket system of the type~\eqref{int_Lie} if the original system has the form $\dot x=u$, e.g., by expressing the gradient-like dynamics as a linear combination of certain Lie brackets.
\begin{remark}
  In formula~\eqref{class}, we assume that $F_1(z)\ne0$ except for at most a countable set of isolated zeros $Z^*=\{z_k^*\}$.
  We treat the function $\Psi_1(z):=\int{{\tfrac{F_0(z)}{F_1(z)^2} }dz }$ as an antiderivative of $\displaystyle\tfrac{F_0(z)}{F_1(z)^2}$ defined on the open set ${\mathbb R}\setminus Z^*$, so that~\eqref{class} holds as an identity with continuous functions in a neighborhood of each point $z\notin Z^*$.
  As the functions $F_1$ and $F_2$ are assumed to be globally continuous, formula~\eqref{class} is treated
  in the sense that $F_2(z_k^*)=-\lim_{z\to z_k^*}F_1(z)\Psi_1(z)$ at each $z_k^*\in Z$.
\end{remark}
Formula~\eqref{class} can also be used to approximate gradient-like flows of multivariable cost functions. Consider the system\\
{\begin{equation}\label{int_n}
\dot x=\sum_{i=1}^n\Big(F_{1i}(J(x))u_{1i}( t)+F_{2i}(J(x)) u_{2i}( t)\Big)e_i,\,x\in\mathbb R^n,
\end{equation}
where  $J{\in}C^2(\mathbb R^n;\mathbb R)$,  $e_i$ denotes the $i$-th unit vector in $\mathbb R^n$.
\begin{theorem}~\label{thm_pr}
Suppose that each pair $F_{1i},F_{2i}\in C^1(\mathbb R;\mathbb R)$ satisfies  relation~\eqref{class} with some $F_{0i}:\mathbb R\to\mathbb R$. Define $u_{si}( t):=\sqrt\omega\tilde u_{si}(\omega t)$, $s=1,2$, $i=\overline{1,n}$, $\omega>0$, where the
functions $\tilde u_{si}$ {satisfy A0.}
 Then the  Lie bracket system for~\eqref{int_n} has the form
\begin{align}
\label{gradflow}
  { \dot{\bar x}}=-\sum_{i=1}^n \beta_{2i,1i}\frac{\partial J(\bar x)}{\partial\bar x_i}F_{0i}(J(\bar x)) e_i.
\end{align}
Moreover, if  $F_{1i},F_{2i}{\in C^2(\mathbb R;\mathbb R)}$ 
and a compact set $ S{\subset}\mathbb R^n$ is locally (globally) uniformly asymptotically stable for~\eqref{gradflow}, then $S$ is locally (semi-globally) practically uniformly asymptotically stable for~\eqref{int_n}.
\end{theorem}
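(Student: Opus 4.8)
The plan is to recognize \eqref{int_n} as a control-affine system of the form \eqref{aff} with zero drift, $f_0\equiv 0$, and $2n$ control vector fields $g_{1i}(x):=F_{1i}(J(x))\,e_i$ and $g_{2i}(x):=F_{2i}(J(x))\,e_i$, driven by the dithers $\sqrt\omega\,\tilde u_{si}(\omega t)$. Once this identification is made, the first assertion reduces to evaluating the general Lie bracket system \eqref{affLie} for this particular family of fields, and the second assertion becomes a direct application of Lemma~\ref{dthm}.

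For the bracket computation I would first treat the two fields sharing the same coordinate axis. Since both $g_{1i}$ and $g_{2i}$ are scalar multiples of $e_i$, the calculation collapses to the one-dimensional situation already handled in Theorem~\ref{thm_appr}: using the chain rule $\partial_{x_k}F_{si}(J)=F_{si}'(J)\,\partial_{x_k}J$, one obtains
\begin{equation*}
[g_{1i},g_{2i}](x)=\Big(F_{1i}(J)F_{2i}'(J)-F_{2i}(J)F_{1i}'(J)\Big)\frac{\partial J}{\partial x_i}\,e_i=-F_{0i}(J)\frac{\partial J}{\partial x_i}\,e_i,
\end{equation*}
where the last equality is precisely relation \eqref{pfaff}, whose general solution is \eqref{class}. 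Multiplying by the correlation coefficient $\beta_{2i,1i}$ and summing over $i$ reproduces the right-hand side of \eqref{gradflow}.

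I expect the main obstacle to be the treatment of the cross terms, i.e. the brackets $[g_{si},g_{s'j}]$ with $i\neq j$. A direct computation gives $[g_{si},g_{s'j}]=F_{si}(J)F_{s'j}'(J)\,\partial_{x_i}J\,e_j-F_{s'j}(J)F_{si}'(J)\,\partial_{x_j}J\,e_i$, which does not vanish pointwise; hence \eqref{gradflow} can only follow if the correlation coefficients coupling distinct channels are zero. I would therefore make explicit that the dithers are chosen so that the only nonzero cross-correlations in A0 are those pairing $\tilde u_{1i}$ with $\tilde u_{2i}$ for a common index $i$ (as is standard, e.g. by assigning distinct frequencies to distinct coordinate directions). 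Under this orthogonality the off-diagonal brackets drop out of \eqref{affLie}, leaving exactly \eqref{gradflow}.

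It then remains to transfer the stability. Here I would simply verify the hypotheses of Lemma~\ref{dthm}: the drift $f_0\equiv0$ is trivially $C^2$, and each control field $g_{si}=F_{si}(J)\,e_i$ is $C^2$ because $F_{si}\in C^2(\mathbb R;\mathbb R)$ by assumption and $J\in C^2(\mathbb R^n;\mathbb R)$, so the product and composition are $C^2$; the dithers satisfy A0 by construction. Since \eqref{gradflow} has been identified as the Lie bracket system of \eqref{int_n}, Lemma~\ref{dthm} applies verbatim and yields that local (global) uniform asymptotic stability of a compact set $S$ for \eqref{gradflow} implies its local (semi-global) practical uniform asymptotic stability for \eqref{int_n}, which is the assertion.
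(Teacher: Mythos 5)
Your proposal is correct and follows essentially the same route as the paper, whose entire proof of this theorem is the single line ``follows directly from Lemma~\ref{dthm} and Theorem~\ref{thm_appr}'': you identify \eqref{int_n} as a drift-free instance of \eqref{aff} with the $2n$ fields $F_{si}(J(x))e_i$, reduce the same-axis brackets to the computation of Theorem~\ref{thm_appr} via \eqref{pfaff}, and invoke Lemma~\ref{dthm} for the stability transfer, checking the $C^2$ hypothesis exactly as needed. The one place where you go beyond the paper is the cross-term issue, and you are right to flag it: the brackets $[g_{si},g_{s'j}]$ with $i\neq j$ do not vanish pointwise, so A0 alone does not force the Lie bracket system \eqref{affLie} to collapse to \eqref{gradflow} --- one additionally needs the cross-channel correlation coefficients $\beta$ to vanish. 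The paper leaves this implicit (its terse proof silently treats each coordinate pair in isolation), and it is guaranteed only by the concrete choice \eqref{cont_eps}, where the distinct integer frequencies $k_i\neq k_j$ make all cross-channel iterated-integral coefficients zero by orthogonality of the trigonometric dithers over a common period. Your explicit orthogonality hypothesis is the honest way to state what the theorem tacitly assumes, so this is added rigor rather than a deviation; with it, your argument is complete.
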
}
The proof follows directly from Lemma~\ref{dthm} and Theorem~\ref{thm_appr}.
\subsection{{Examples}}
Formula~\eqref{class}  describes a whole class of vector fields $F_1$, $F_2$ such that the trajectories of system~\eqref{int} approximate trajectories of the gradient-like system~\eqref{int_Lie}.
Moreover, formula~\eqref{class} unifies and generalizes some known results which are discussed in the following. In particular, assume $F_0=1$.\\
In \cite{Due-Sta-Ebe-Joh-12}, the case
$$
 F_1(z)=z,~F_2(z)=1
$$
was considered, which corresponds to system \eqref{int1}.
The paper \cite{Sch14} introduced the functions
$$
 F_1(z)=\mathrm{sin}(z),~F_2(z)=\mathrm{cos}(z),
$$
 which possess a priori known bounds  (i.e. one has bounded update rates) for a fixed $\omega$, due to the property $-1 \le F_s(z) \le 1$, $s=1,2$.\\
The paper~\cite{Sch14b} presented a class of control functions vanishing at the origin. In particular, the control $u=\sqrt{\omega}(\alpha\|x\|^r\cos(\omega t){-}\tfrac{k}{1-r}\|x\|^{2-r}\sin(\omega t))$, $r{\in}[0,1)$, $\alpha,k{>}0$ was proposed, which corresponds to
$J(x){=}\|x\|^{2m}$ with $m{>}0$,
$$
F_1(z){=}\alpha z^{\tfrac{r}{2m}}, F_2(z){=}-\frac{k}{1-r} z^{\tfrac{2-r}{2m}}, F_0(z){=}\frac{1}{m}z^{\tfrac{1-m}{m}},
$$
so that the above control can also be described by formula~\eqref{class}. In~\cite{Sch14b}, practical asymptotic stability conditions for   control-affine system~\eqref{affLie} with vector fields $f_i\in C^2(\mathbb R^n\setminus\{0\};\mathbb R^n)$ were
presented  under the assumption $[f_i,f_j]\in C^2(\mathbb R^n;\mathbb R^n)$.\\
Another case with vanishing at the origin vector fields was studied in \cite{SD17}, namely,
$$
 F_1(z)=\sqrt{z}\mathrm{sin}(\mathrm{ln}(z)),~F_2(z)=\sqrt{z}\mathrm{cos}(\mathrm{ln}(z))
$$
were considered for $z\ge0$.
\\
Besides unifying these known results, formula~\eqref{class} allows also to construct novel
controls with desirable  properties. In particular, it is possible to
combine the advantage of having bounded update rates and vanishing perturbation amplitudes,  e.g.,
this can be achieved with $F_0(z)=1$ and
$$
F_1(z)=\sqrt{\phi_1(z)}\sin(\phi_2(z)),\,F_2(z)=\sqrt{\phi_1(z)}\cos(\phi_2(z)), $$
$$\phi_1(z)=  {\tfrac{1-e^{-z}}{1+e^{z}}},\,\phi_2(z)=e^{z}+2\ln(e^{z}-1),
$$
for $z>0$, $F_1(0)=F_2(0)=0$. For $z\ge0$, one has $|F_s(z)|\le \sqrt{3-2\sqrt2}$, $s=1,2$.
\\
The controls which  tend to zero whenever $z$ approaches the origin  are useful when the minimal value of $J(x^*)$ is known a priori (but not the extremum point $x^*$ itself). For example, such situations arise
in the distance minimization, consensus or synchronization problems and, as we will see in Section~4.2, in vibrational
stabilization problems where $J$ plays the role of a Lyapunov function. Note that the solutions $F_1,F_2$ to the differential equation~\eqref{pfaff} are not necessary of class $C^2$, as  it was illustrated by the above examples. In the next section, we will relax the regularity assumption on $F_{1i},F_{2i}$ and propose new asymptotic stability conditions for system~\eqref{int_n}.\\
\\
\subsection{Stability conditions}
In this section, we establish the second main result of the paper.
{Namely, we  present  conditions for \textit{asymptotic stability in the sense of Lyapunov}, which are in contrast to many existing results in extremum seeking literature stating only the \textit{practical asymptotic stability}. We
  present a novel approach for studying the stability properties of extremum seeking system~\eqref{int_n} with a broad family of vector fields described by~\eqref{class}. 
 We will refer to the following assumptions in a domain $D\subseteq\mathbb R^n$.\\
{
 {A1} \textit{There exists an $x^*{\in} D$  such that $\nabla J(x^*){=}0$, $\nabla J(x){\ne} 0$ for all $x{\in }D{\setminus}\{x^*\}$; $J(x^*){=}J^*{\in}\mathbb R$, $J(x){>}J(x^*)$ for all $x{\in} D{\setminus}\{x^*\}$.}\\
{A2} \textit{ There exist constants $\gamma_1,\gamma_2,\kappa_1,\kappa_2,\mu$, and $m_1\ge 1$, such that, for all {$x\in D$},}
     $$ \begin{aligned}
     \gamma_1\|x{-}x^*\|^{2m_1} {\le} &\tilde J(x){\le} \gamma_2\|x{-}x^* \|^{2m_1},\quad\tilde J(x) = J(x)-J^*,\\
      \kappa_1 \tilde J(x)^{2{-}\frac{1}{m_1}}{\le}& \|\nabla J(x)\|^2{\le}\kappa_2 \tilde J(x)^{2{-}\frac{1}{m_1}},\\
    \left\|\frac{\partial^2 J(x)}{\partial x^2}\right\|{\le}&\mu \tilde J(x)^{1{-}\frac{1}{m_1}}.
    \end{aligned}
    $$}
{A3} \textit{ {The functions $F_{si}(J(\cdot))\in C^2(D\setminus\{x^*\};\mathbb R)$, $D\subseteq\mathbb R^n$; the functions $L_{F_{pj}}F_{si}(J(\cdot))$, $L_{F_{ql}}L_{F_{pj}}F_{si}(J(\cdot))\in C(D;\mathbb R)$, for all   $s,p,q\in\{1,2\}$,
$i,j,l=\overline{1,n}$.}\\
{ A4} \textit{The functions $F_{si}(J(x))$ are Lipschitz continuous on each compact  $\chi\subset D$, and
\begin{align*}
&\alpha_1 \tilde J^{m_2}(x)\le F_{0i}(\tilde J(x))\le \alpha_2 \tilde J^{m_2}(x),\\
&|F_{si}(\tilde J(x))|\le  M \tilde J^{m_3}(x),\\
&\|L_{F_{ql}}L_{F_{pj}}F_{si}(\tilde J(x))\|\le H \tilde J^{m_4}(x),\text{ for all }x\in D,
\end{align*}
for all $s,p,q{=}\overline{1,2}$, $i,j,l{=}\overline{1,n}$, with $m_2{\ge} \tfrac{1}{m_1}-1$,  $m_3{=}\tfrac{1}{2}(m_2+1)$, $m_4=\tfrac{3}{2}(1+m_2)-\tfrac{1}{m_1}$, and some $\alpha_1,\alpha_2,M>0$, $H\ge0$.  
}}\\
{Assumption A1 requires that the cost function $J$ has an isolated local minimum, and the attained minimal value of $J$ at $x^*$ is $J^*$.
  Assumption A2 is obtained from the requirement  that the cost function $J$ locally behaves as a power function.
Assumption A3 requires that the first and the second Lie derivatives of $F_{si}(J(\cdot))$ be continuous, even if $F_{si}(J(\cdot))$ itself are not continuously differentiable in $D$. {Similar assumptions were exploited, e.g.,
in~\cite{Sch14b,SD17} for a certain class of controls}. Finally,  assumption A4 requires that the controls vanish at the extremum point. As it will be shown in Theorem~\ref{thm_class}, if the minimal value of the cost function is known for the functions in~\eqref{int_n}, then   the
asymptotic stability in the sense of Lyapunov can be ensured. A4 is not needed for the practical asymptotic stability. Although the choice of $m_3,m_4$ may seems artificial, it  naturally holds if all $|F_{si}(J(x))|$ are bounded with the same power of $J$. \\
We will use the following trigonometric inputs in~\eqref{int_n} (however,  some other inputs are possible, see, e.g.,~\cite{TNM08}):
 \begin{equation}\label{cont_eps}
\begin{aligned}
& u_{1i}(t){=}u_{1i}^\varepsilon(t){=}2\sqrt{{\pi k_i}{\varepsilon^{-1}}}\cos\big({2\pi k_i t}{\varepsilon^{-1}}\big),\\
& u_{2i}(t){=}u_{2i}^\varepsilon(t){=}2\sqrt{{\pi k_i}{\varepsilon^{-1}}}\sin\big({2\pi k_i t}{\varepsilon^{-1}}\big),
\end{aligned}
\end{equation}
 where $k_i\in\mathbb N$, $k_i\ne k_j$ for all $i\ne j$, and $\varepsilon$ is a positive parameter. Note that such inputs satisfy A0 with $\omega=\varepsilon^{-1}$, $T=\varepsilon$. We underline the dependence of controls on $\varepsilon$ by using the
 superscript $u_{si}^\varepsilon$ in~\eqref{cont_eps}.
\\
{The next result states conditions for the \textit{asymptotic and exponential  stability} (both for the practical stability and for the stability in the sense of Lyapunov) of $x^*$ for system~\eqref{int_n} with the class of controls given by~\eqref{class}. Although practical asymptotic stability can be
proven with other methods (see Theorem~\ref{thm_pr}), our result does not require the $C^2$-assumption. Furthermore, its proof presents a constructive procedure for defining $\varepsilon$ in~\eqref{cont_eps}.
\begin{theorem}~\label{thm_class}
{Assume that  the cost function $J\in C^2(\mathbb R^n;\mathbb R)$ satisfies A1--A2, the functions $F_{1i},F_{2i}$ in~\eqref{int_n} satisfy  assumption A3 in  $D=B_\Delta(x^*)$ ($0<\Delta\le +\infty$) and the relation~\eqref{class} with some $F_{0i}$, for
each $i\in\{1,\dots,n\}$. Then the following statements hold.\\
I. Let $F_{0i}(J(x))=\alpha$, where $\alpha$ is a positive constant. Then $x^*$ is
 practically exponentially stable for~\eqref{int_n} if $m_1=1$, and $x^*$ is  practically asymptotically stable for~\eqref{int_n} if $m_1>1$.
\\ Namely,
{for any $\delta\in\Big(0,\sqrt[2m_1]{\tfrac{\gamma_1}{\gamma_2}}\Delta\Big)$, $\bar\lambda\in(0,\alpha\kappa_1)$, $\rho\in(0,\delta)$} there exists an $\bar\varepsilon{>}0$ such that, for any $\varepsilon{\in}(0,\bar\varepsilon]$,
$\lambda\in(0,\bar\lambda]$, the solutions of system~\eqref{int_n} with $x^0{\in} B_{\delta}(x^*)$ and $u_{1i}^{\varepsilon}(t),u_{2i}^{\varepsilon}(t)$ defined by~\eqref{cont_eps}, $i=\overline{1,n}$, satisfy}
\begin{align}\label{decay}
&\|x(t)-x^*\|\le \sigma(t)\sqrt[m_1]{\tfrac{\gamma_2}{\gamma_1}}\|x(t)-x^*\|\varphi_{\tilde m}(\lambda (t-\varepsilon))+\rho,\\
&\text{where }\sigma(t){\le}\Big(1{+}\frac{ M}{ L}\Big(\tfrac{\gamma_2}{\gamma_1}\Big)^{\tilde m/2}\delta^{m_1\tilde m}(e^{\nu L\varepsilon}{-}1)\Big),\text{ for all }t\ge0,\nonumber\\
&\sigma(t){\to}1\text{ as }t{\to}\infty,\;\tilde m=1-\tfrac{1}{m_1},\nonumber
\end{align}
\begin{gather}\label{varphi}
\varphi_{\tilde m}(s)=
\left\{\begin{aligned}
&  e^{-\frac{s}{2}}, \text{ if }\tilde m=0, \\
&\Big(1{+}{\tilde m}{s}J^{\tilde m}(x^0)\Big)^{-\tfrac{1}{2m_1\tilde m}},\text{ if }\tilde m>0.
\end{aligned}\right.
\raisetag{10mm}
\end{gather}
II. Let $F_{0i}(J(\cdot)),F_{1i}(J(\cdot)),F_{2i}(J(\cdot))$ satisfy A4. Then $x^*$ is   exponentially stable for~\eqref{int_n} if $\tilde m=1+m_2-\tfrac{1}{m_1}=0$, and $x^*$ is asymptotically stable for~\eqref{int_n} if $\tilde m>0$.\\ Namely,
{ for any $\delta\in\Big(0,\sqrt[2m_1]{\tfrac{\gamma_1}{\gamma_2}}\Delta\Big)$, $\bar\lambda\in(0,\alpha_1\kappa_1)$,} there exists an $\bar\varepsilon{>}0$ such that, for any $\varepsilon{\in}(0,\bar\varepsilon]$, $\lambda\in(0,\bar\lambda]$, the
solutions of system~\eqref{int_n} with $x^0{\in} B_{\delta}(x^*)$ and $u_{1i}^{\varepsilon}(t),u_{2i}^{\varepsilon}(t)$ defined by~\eqref{cont_eps}, $i=\overline{1,n}$, satisfy
 property~\eqref{decay} with $\rho=0$.
\end{theorem}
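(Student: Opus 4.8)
The plan is to analyze the Lyapunov candidate $V(x):=\tilde J(x)=J(x)-J^*$ directly along the true system~\eqref{int_n}, not merely along its Lie bracket approximation~\eqref{gradflow}. First I would record the behavior of $V$ along the averaged dynamics: since the inputs~\eqref{cont_eps} give $\beta_{2i,1i}=1$, the flow~\eqref{gradflow} yields $\dot V=-\sum_i(\partial_i J)^2F_{0i}(J)$, and combining the lower bounds of A2 and A4 produces the scalar comparison inequality $\dot V\le-\lambda V^{1+\tilde m}$ with $\tilde m=1+m_2-\tfrac1{m_1}$ (and $m_2=0$ in Part~I). Integrating this ODE reproduces exactly the profile $\varphi_{\tilde m}$ of~\eqref{varphi}: the case $\tilde m=0$ gives exponential decay $e^{-\lambda t/2}$, while $\tilde m>0$ gives the algebraic rate $(1+\tilde m\lambda t\,J^{\tilde m}(x^0))^{-1/(2m_1\tilde m)}$, the exponent $1/(2m_1)$ arising when passing between $V$ and $\|x-x^*\|$ via A2.

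Second, I would transfer this decay to the actual system through a period-by-period estimate over intervals of length $T=\varepsilon$. The tool is a Volterra/Chen--Fliess expansion of the solution of~\eqref{int_n} on $[t_0,t_0+\varepsilon]$ in powers of $\sqrt\varepsilon$, the scaling forced by the $\sqrt\omega$ amplitudes. Because each $u^\varepsilon_{si}$ has zero mean over a full period (A0), the first-order term vanishes at the right endpoint, while the second-order term collects the Lie bracket drift weighted by $\int_0^T\int_0^\theta u_{2i}u_{1i}\,d\tau\,d\theta=\beta_{2i,1i}T$. Evaluating $V$ at the sample points then gives $V(t_0+\varepsilon)-V(t_0)=-\varepsilon\sum_i(\partial_iJ)^2F_{0i}(J)+R$, where the drift is precisely $\varepsilon\,\nabla J^T\dot{\bar x}$ computed as in Theorem~\ref{thm_appr} and $R$ is the expansion remainder. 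Assumption A3 is what legitimizes this expansion although $F_{si}(J(\cdot))$ need not be differentiable at $x^*$: only the iterated Lie derivatives $L_{F_{pj}}F_{si}$ and $L_{F_{ql}}L_{F_{pj}}F_{si}$ must be continuous, and these are exactly the coefficients appearing at orders $\varepsilon$ and $\varepsilon^{3/2}$.

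The decisive step is the remainder estimate. Using A4, the dominant remainder monomial comes from $\tfrac12 L_fL_f(\partial_iJ\,F_{si})\int_0^\varepsilon\big(\int_0^t u\big)^2u\,dt$, which is of order $\varepsilon^{3/2}$; since $\partial_iJ\sim V^{1-\frac1{2m_1}}$ and $L_fL_fF_{si}\sim HV^{m_4}$, its $V$-power equals $1-\tfrac1{2m_1}+m_4=1+\tfrac32\tilde m$. The choices $m_3=\tfrac12(m_2+1)$ and $m_4=\tfrac32(1+m_2)-\tfrac1{m_1}$ are calibrated so that every remainder monomial carries a power of $V$ at least $1+\tilde m$, giving $|R|\le C\varepsilon^{3/2}V^{1+\frac32\tilde m}$ near $x^*$. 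As the drift is $-\lambda\varepsilon V^{1+\tilde m}$, the ratio $|R|/(\lambda\varepsilon V^{1+\tilde m})=O(\sqrt\varepsilon\,V^{\tilde m/2})$, so for $\varepsilon$ below an explicit threshold $\bar\varepsilon$ the remainder is absorbed and the per-period decrease persists with any $\lambda\in(0,\bar\lambda]$, $\bar\lambda<\alpha_1\kappa_1$. This domination upgrades the conclusion from practical to Lyapunov (exponential) stability in Part~II, with $\rho=0$. In Part~I, A4 is not assumed, so some remainder terms survive with a non-$V$-vanishing contribution; these accumulate into the residual offset $\rho$, which is made arbitrarily small by shrinking $\varepsilon$, yielding only practical stability.

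Finally I would pass from the sampled inequality to the continuous bound~\eqref{decay}. The intra-period excursion of the trajectory is $O(\sqrt\varepsilon)$, since the first-order oscillatory term is nonzero for $t$ strictly inside a period, so $\|x(t)-x^*\|$ between samples can exceed its sampled value by a factor controlled via a Gronwall estimate over one period; this produces exactly the factor $\sigma(t)$, which tends to $1$ as $\varepsilon\to0$ and as $t\to\infty$. The main obstacle throughout is the simultaneous bookkeeping of the $\varepsilon$-order \emph{and} the $V$-order of every remainder term: one must verify that the non-smoothness at $x^*$ (handled through A3) and the vanishing rates (encoded in the A4 exponents $m_2,m_3,m_4$) conspire so that $R$ is of strictly higher order in $\varepsilon$ while remaining at least of order $V^{1+\tilde m}$, as it is precisely this double matching that separates Part~II (stability in the sense of Lyapunov) from Part~I (practical stability).
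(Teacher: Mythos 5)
Your proposal is correct in substance and follows the paper's architecture almost step for step: a Volterra/Chen--Fliess expansion over one period $\varepsilon$ in which the zero-mean property kills the first-order term and the second-order term produces the drift $-\varepsilon\sum_i\frac{\partial J}{\partial x_i}F_{0i}(J)e_i$ (the paper's~\eqref{volt}), the double bookkeeping of $\varepsilon$-powers and $V$-powers calibrated by $m_3=\tfrac12(m_2+1)$, $m_4=\tfrac32(1+m_2)-\tfrac1{m_1}$ so that the remainder-to-drift ratio is $O(\sqrt\varepsilon\,V^{\tilde m/2})$ (exactly the paper's Lemma~\ref{lemma_r} estimate $\|R(\varepsilon)\|\le\Omega\varepsilon^{3/2}J^{\frac32\tilde m+\frac1{2m_1}}(x^0)$), iteration at sample times $t=\varepsilon,2\varepsilon,\dots$ to obtain $\varphi_{\tilde m}$, an intra-period Gronwall bound (Lemma~\ref{lemma_x}) producing $\sigma(t)$, and the residual ball $B_\rho(x^*)$ in Part~I. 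The one genuine difference is the middle step: you expand $V=\tilde J$ directly along the flow and close the argument with a second-order Taylor comparison, whereas the paper expands the \emph{state} and then invokes a dedicated device (Lemma~\ref{lemma_v}): the reparametrization $v(\theta)=V^{1/m_1}(x^0+\theta y)$, whose derivative is Lipschitz with constant $\bar L=\frac{(m_1-1)\kappa_2+\mu m_1}{m_1^2}\|y\|^2$ \emph{independent of the level} $V(x^0)$, yielding the one-step contraction in the precise form $\big(1-\frac{\varepsilon\lambda}{m_1}V^{\tilde m}\big)^{m_1}$. Your direct route is workable but hides two points that Lemma~\ref{lemma_v} is built to handle when $m_1>1$: the Hessian bound of A2 is only $\|\partial^2 J(\xi)\|\le\mu\tilde J(\xi)^{1-1/m_1}$ at \emph{intermediate} points $\xi$ on the chord, so you must separately relate $V(\xi)$ to $V(x^0)$, and the chord may pass through $x^*$, where $v'$ degenerates and $F_{si}(J(\cdot))$ is not $C^2$ (the paper treats this both inside Lemma~\ref{lemma_v} and, for the validity of the Volterra expansion itself, via the invariance-of-$\Xi$ argument in Lemma~\ref{volterra} and properties P1--P2 in Part~I — a point your sketch leaves implicit). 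What your version buys is economy (no separate comparison lemma, the drift appears directly as $\varepsilon\nabla J^T\dot{\bar x}$); what the paper's version buys is clean, level-independent constants and explicit $\bar\varepsilon$ and decay-rate formulas, which is precisely the constructive content advertised in Theorem~\ref{thm_class}.
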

The proof is in Appendix~\ref{proofs}.  It represents a constructive procedure for choosing  $\bar\varepsilon$ and contains some auxiliary results which may be used
in related problems concerning stabilization and stability analysis.
\begin{remark}
One of the main assumptions required for  asymptotic stability in the sense of
  Lyapunov is $F_{si}(J^*){=}0$ ($s{=}1,2$, $i{=}\overline{1,n}$.) If the above assumption is not satisfied, the proposed result states \textit{practical} asymptotic or exponential stability conditions which do not require the knowledge of $x^*$ and $J(x^*)$, but just  the local behavior of $J$ in a neighborhood of $x^*$.
   Note that only the existence of positive constants in A2 is essential for the assertion of Theorem~3, while we do not require the knowledge of their exact values in the stability proof. However, a crucial point of our construction is that the value of $\bar\varepsilon$ and the decay rate estimates are obtained explicitly in terms of these constants.
 We believe that this theoretical result is conceptually valuable. In practice, this result can be used for
adjusting  the control parameters in order to ensure better convergence properties, provided that the corresponding
information on the cost is available.
\end{remark}
 To prove the {exponential and} asymptotic stability in the sense of Lyapunov, the value $J^*$  (but not $x^*$ itself) has to be known so that $F_{si}$ can be chosen appropriately. The knowledge of $J^*$ may seem quite
restrictive in the context of extremum seeking, however,  as discussed in Section~3.2 and Section~\ref{appl},
such cases are still of relevance in applications. {Besides, if $J^*$ is unknown, often it is possible to choose an acceptable minimal  value  $\hat J>J^*$ and to use   $F_{si}(J-\hat J)$  instead of $F_{si}(J-J^*)$. Although in this case only the practical asymptotic stability can be proven, the controls satisfying $A4$ still may exhibit better performance if $\hat J$ is close enough to $J^*$.}


 \section{Examples}~\label{appl}
 \subsection{Extremum seeking}

As it has  been already mentioned, formula~\eqref{class} describes a whole class of vector fields in~\eqref{int} with various properties for approximating  gradient-like flows of the cost function.  In this section, we illustrate the behavior of solutions
of~\eqref{int_n} with different vector fields discussed in Section~\ref{Main} and controls of the type~\eqref{cont_eps}. For numerical simulation,  we take $J_1(x){=}2(x{-}x^*)^2$, $x\in\mathbb R$,  $x^*{=}1$, $J^*{=}0$, $k_1=1$, and $\varepsilon=0.1$ in each example.
The extremum seeking system introduced in~\cite{Due-Sta-Ebe-Joh-12} is useful in practical implementations due to its simple form:
 \begin{equation}\label{DE}
 \dot x=J_1(x)u_{1}^\varepsilon(t)+u_{2}^\varepsilon(t).
\end{equation}
It can be used for minimizing the cost functions of rather general form, without any information about its analytical expression and extremum values.
The same property holds for the control strategy with so-called bounded updated rates proposed in~\cite{Sch14}:
 \begin{equation}\label{Kr}
   \dot x=\sin(J_1(x))u_{1}^\varepsilon(t)+\cos(J_1(x))u_{2}^\varepsilon(t).
 \end{equation}
 \begin{figure*}[t]
    \includegraphics[width=0.495\linewidth]{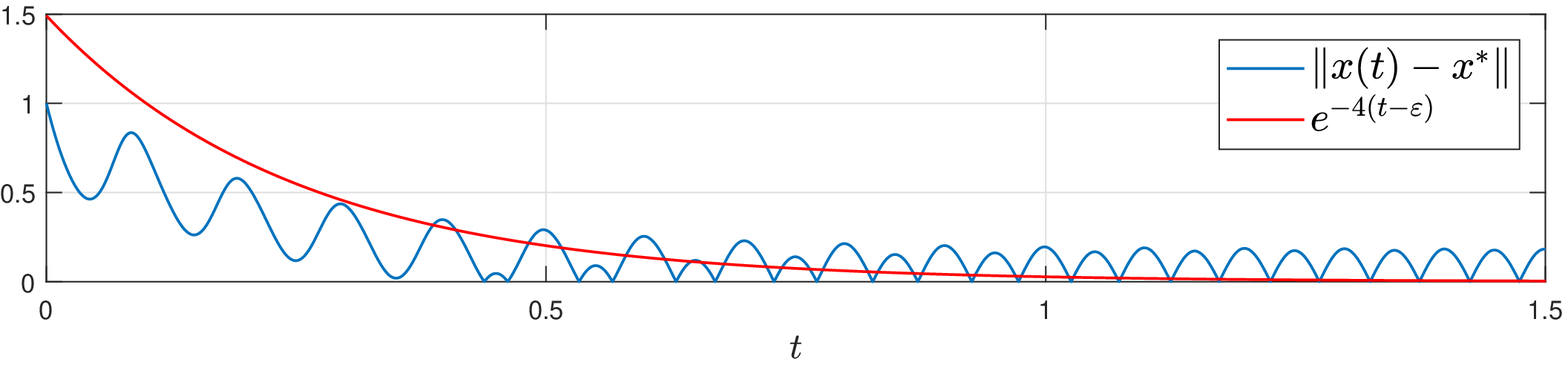}  \includegraphics[width=0.495\linewidth]{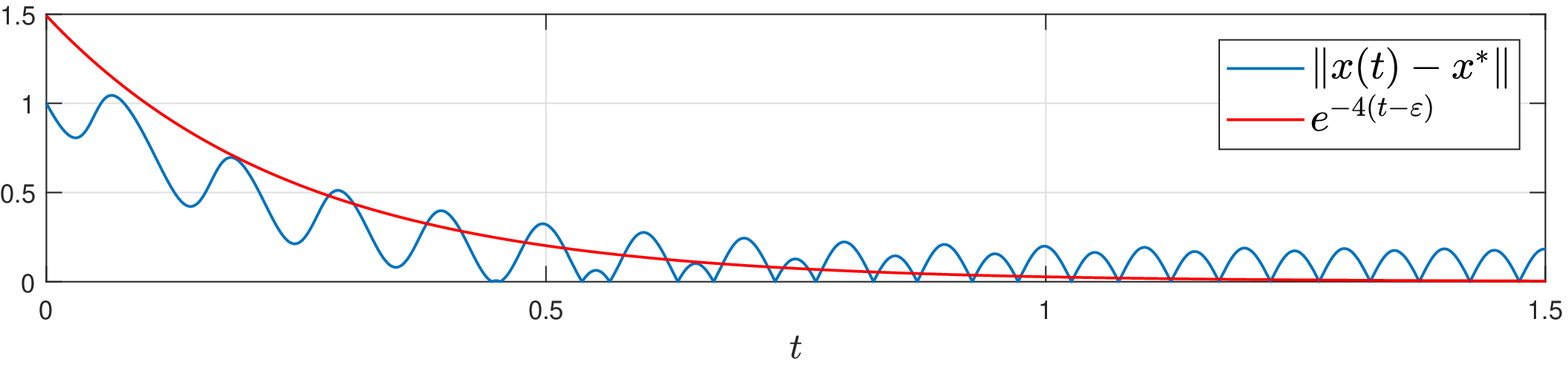}\\
    \includegraphics[width=0.495\linewidth]{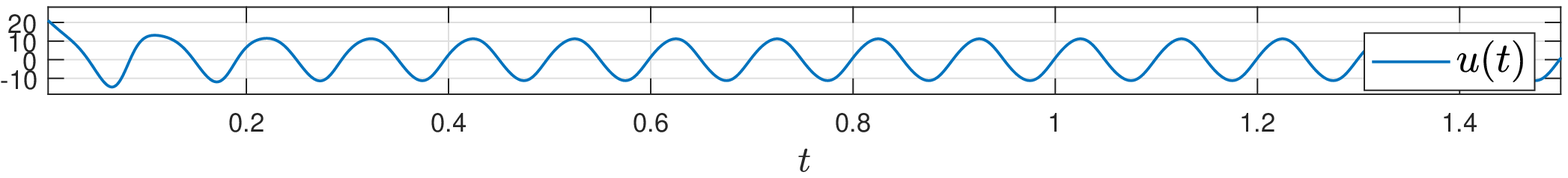}  \includegraphics[width=0.495\linewidth]{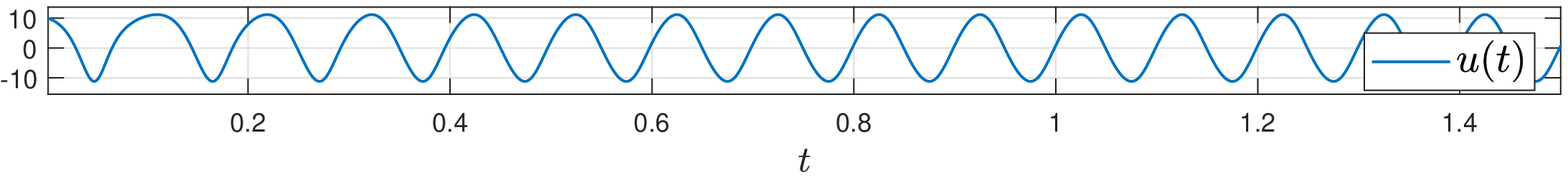}\\
    \begin{minipage}[c][0.2em]{0.48\linewidth}
 \centering{\small a)}
\end{minipage}
    \begin{minipage}[c][0.2em]{0.48\linewidth}
 \centering{\small b)}
\end{minipage}
     \includegraphics[width=0.495\linewidth]{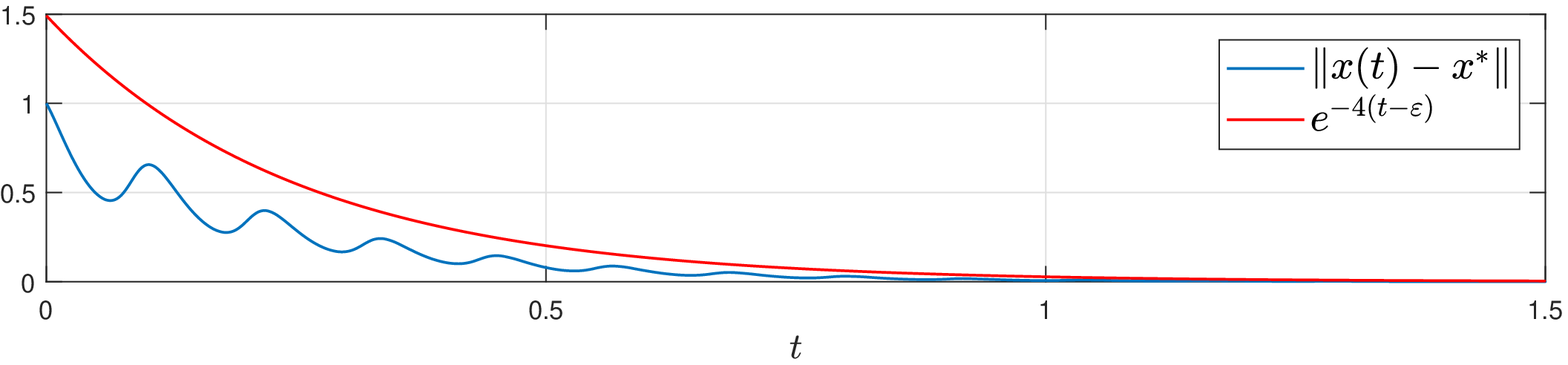} \includegraphics[width=0.495\linewidth]{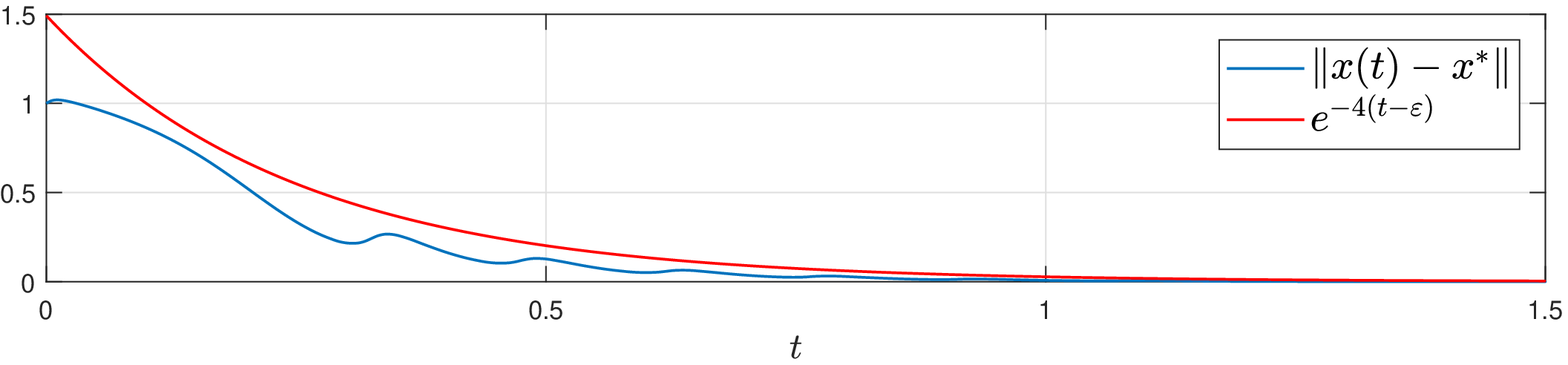}\\
     \includegraphics[width=0.495\linewidth]{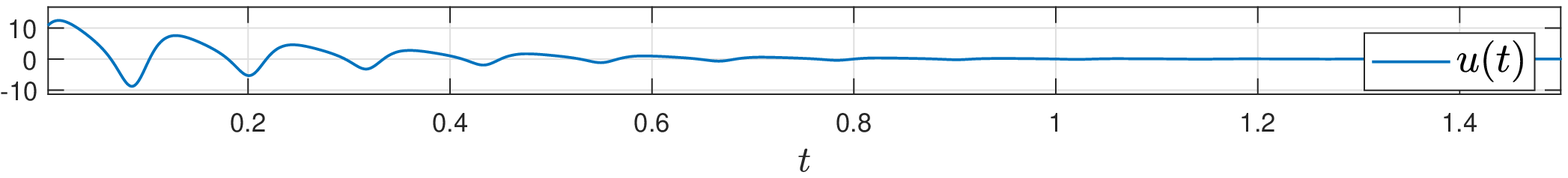} \includegraphics[width=0.495\linewidth]{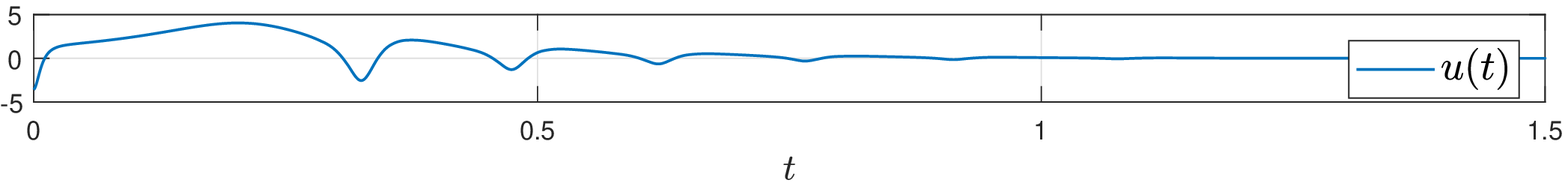}\\
         \begin{minipage}[c][0.2em]{0.48\linewidth}
 \centering{\small c)}
\end{minipage}
    \begin{minipage}[c][0.2em]{0.48\linewidth}
 \centering{\small d)}
\end{minipage}
 \caption{a)--d): Trajectory (top) and control (bottom) of system~\eqref{DE}--\eqref{V2} with $J_1$.}~\label{set1}
\end{figure*}
It is easy to see that both of the above strategies  do not vanish at the extremum point which leads to an oscillating behavior (see Fig.~\ref{set1} a) and b)).
For problems with known  value of the extremum (but not the extremum point),  it is possible to achieve vanishing oscillations
 as $x(t)\to x^*$, as it is stated in Theorem~\ref{thm_class}.
In particular, the following control strategy proposed in~\cite{SD17}  ensures the exponential convergence to  $x^*$:
\begin{equation}\label{Ra}
\begin{aligned}
\dot x&=\sqrt{J_1(x)}\sin\left(\ln (J_1(x))\right)u_{1}^\varepsilon(t)\\
&+\sqrt{J_1(x)}\cos\left(\ln (J_1(x))\right)u_{2}^\varepsilon(t),
\end{aligned}
 \end{equation}
 for $J_1(x)\ne0$, and $\dot x=0$ for $J_1(x)=0$. Indeed, in this case the conditions of Theorem~\ref{thm_class}.II are satisfied with $m_1{=}1,m_2{=}0$.
  \begin{figure}[t]
     \centering
      \includegraphics[width=1\linewidth]{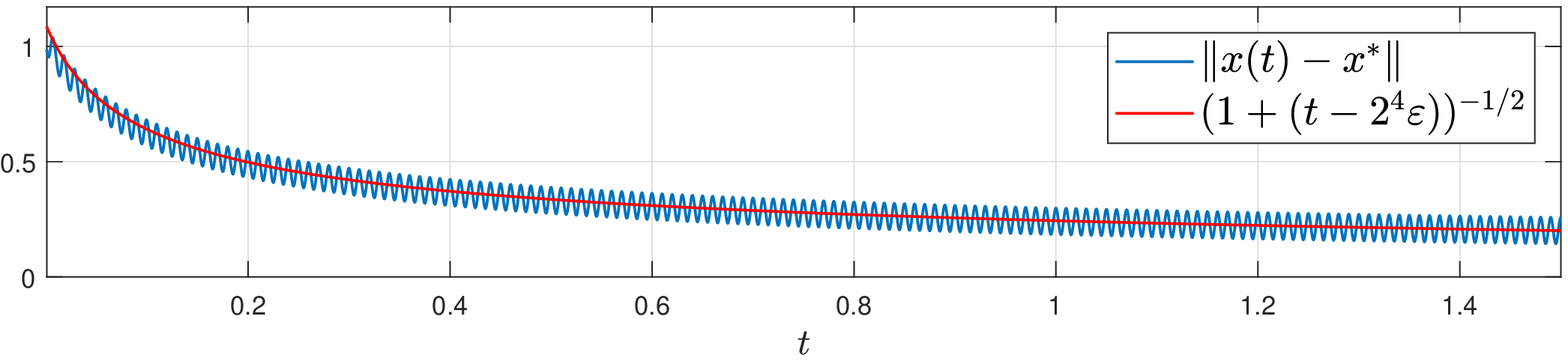}\\
     {\small a)}\\
     \includegraphics[width=1\linewidth]{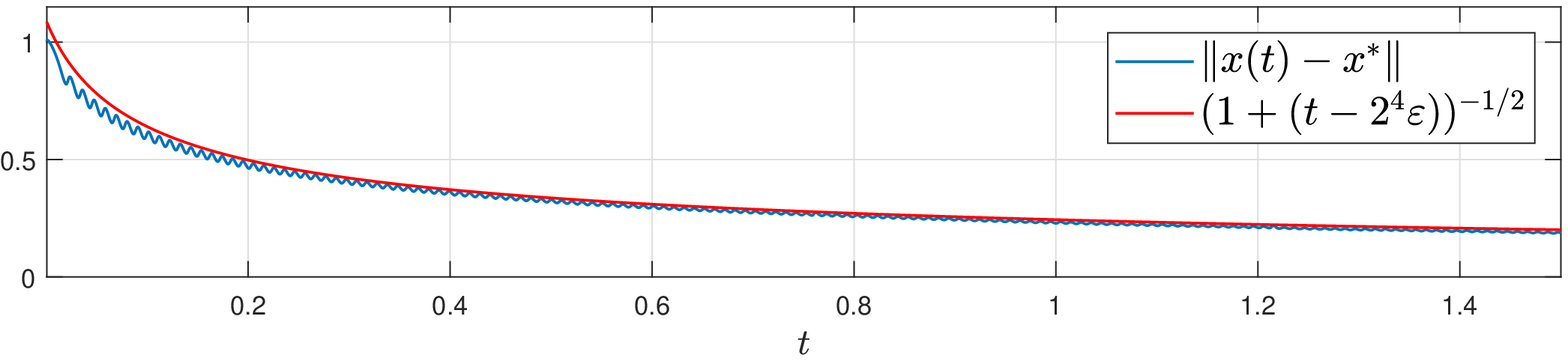}\\
      {\small b)}\\
      \caption{a)--b): Trajectory of systems~\eqref{Kr} and \eqref{V2} with $J_2$.}~\label{set3}
       \end{figure}
 In order to have also bounded update rates, we propose the following extremum seeking system:
  \begin{equation}\label{V2}
\begin{aligned}
\dot x {= }\sqrt{\tfrac{1{-}e^{{-}J_1(x)}}{1{+}e^{J_1(x)}}}&\big(\sin(e^{J_1(x)}{+}2\ln(e^{J_1(x)}{-}1))u_{1}^\varepsilon(t)\\
 {+}&\cos(e^{J_1(x)}{+}2\ln(e^{ J_1(x)}{-}1))u_{2}^\varepsilon(t)\big),
\end{aligned}
 \end{equation}
 for $J_1(x)\ne 0$, and $\dot x=0$ for $J_1(x)=0$. Similarly to the previous example, its vector fields  {locally} satisfy the assumptions of Theorem~\ref{thm_class}.II with $m_1=1$, $m_2=0$.
 Figs.~\ref{set1} c) and d) illustrate the behavior of trajectories of systems \eqref{Ra} and~\eqref{V2}. {The time plots of control functions illustrate that the magnitude of controls satisfying A4 decreases when the cost function approaches the minimum. }
\\
To illustrate the decay rate estimate obtained in Theorem~\ref{thm_class}, consider the function $\varphi(\bar\lambda(t-\varepsilon))$ defined by~\eqref{varphi} with $\bar\lambda=\alpha_1\kappa_1$. In all above  cases, $\alpha_1=8$, $\gamma_1=\gamma_2=\kappa_1=1$, $\varphi(s)=e^{-0.5s}$. Fig.~\ref{set1} demonstrates that estimate~\eqref{decay} holds with good accuracy.
For comparison, consider also $J_2(x)=2(x{-}1)^4$. In this case,  $\varphi(s)=(1+0.5\sqrt2s\|x^0-1\|^2)^{-1/2}$.
 Fig.~\ref{set3} shows the trajectories of systems~\eqref{Kr} and~\eqref{V2} with $\varepsilon=0.01<(\lambda\tilde m \sqrt{J_2(x^0)})^{-1}$, and the time plot of $\varphi$. Observe   that the higher order nonlinearity of $J_2$ results in a slower convergence of the extremum seeking algorithm in comparison with the quadratic $J_1$. This decay rate can be increased, e.g., by choosing $\alpha_1J_2^{-1/2}(x){\le }F_0(J(x)){\le}\alpha_2J_2^{-1/2}(x)$, $\alpha_1,\alpha_2>0$. 
 {\begin{remark}
   In this paper, we do not discuss tuning rules, however, formula~\eqref{class} provides a  possibility to adjust the control parameters. For example, let $u=\sqrt\omega\big(F_1(J(x))\cos(\omega t)+F_2(J(x))\sin(\omega t)\big)$. Taking $F_1(J(x))=c_1 J(x)$, $F_0\equiv c_2$, $\displaystyle F_2(z)=-F_1(z)\int\frac{c_2}{F_1^2(z)}dz\Big|_{z=J(x)}$, we obtain the extremum seeking control $u=c_1\sqrt\omega\big(J(x)\cos(\omega t)+c_2\sin(\omega t)\big)$ with  tuning parameters $c_1,c_2$.
 \end{remark}}
 \subsection{Vibrational stabilization}
 Another application, where  formula~\eqref{class} is of use, is found in the area of the vibrational stabilization
 of systems with partially unknown dynamics. Consider the system
 \begin{equation}\label{sys_vib}
   \dot x=f(x)+g(x)u,
 \end{equation}
 where $x{\in}\mathbb R^n$, $f,g{\in} C^2(\mathbb R^n;\mathbb R^n)$, $u{\in}\mathbb R$.
 It was shown in~\cite{ME,SK13} that, under appropriate assumptions, system~\eqref{sys_vib} can be practically stabilized by using the control law
\begin{equation}\label{cont_vib}
 u=V(x)\sqrt\omega\cos(\omega t)+2\alpha\sqrt\omega\sin(\omega t),
\end{equation}
   \begin{figure}[t]
    \centering
     \includegraphics[width=1\linewidth]{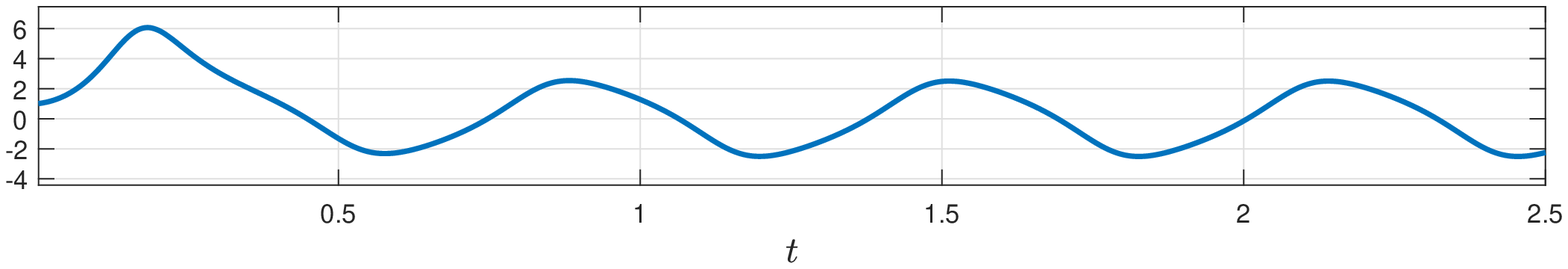}\\
     \includegraphics[width=1\linewidth]{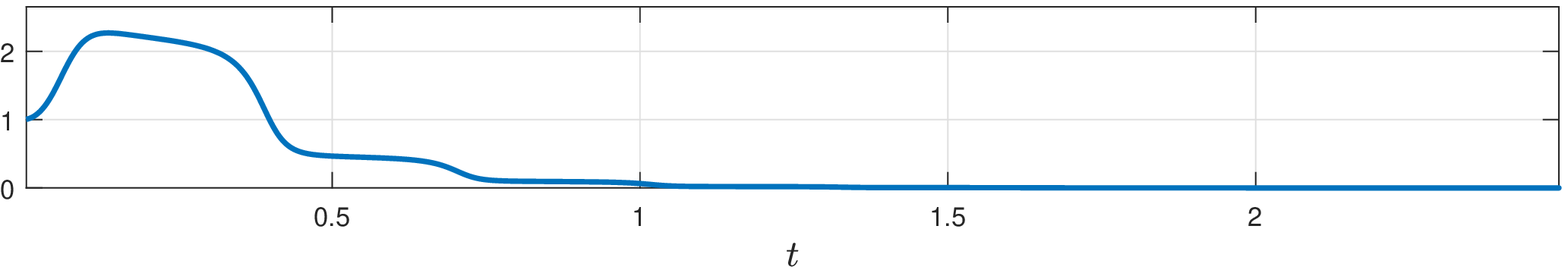}\\
     \includegraphics[width=1\linewidth]{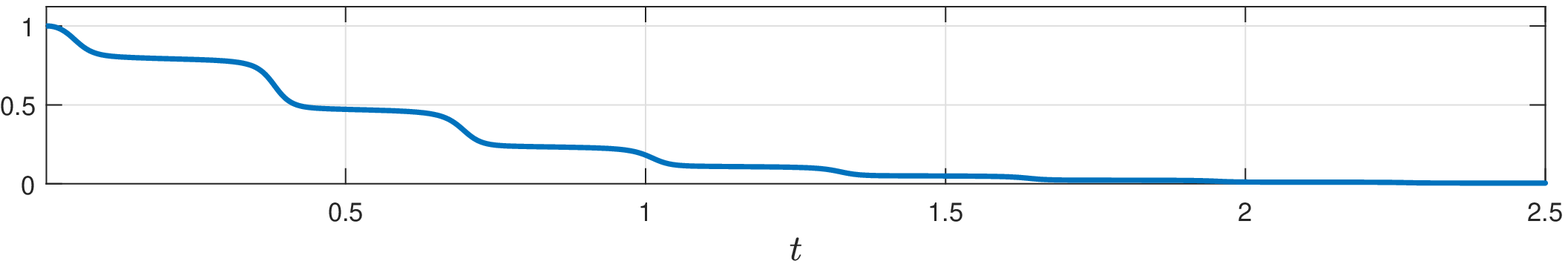}\\
     \caption{Trajectory of~\eqref{ex_vib} with controls as in~\eqref{DE} (top), \eqref{Ra} (middle), \eqref{V2} (bottom).}~\label{fig_vibro}
\end{figure}
 where $\alpha$ is a positive constant, and $V$ is a control Lyapunov function for~\eqref{sys_vib}.
To see why this is possible, compute the corresponding Lie bracket system which takes the form
\begin{equation}\label{lie_vib}
 \dot{\bar x}=f(\bar x)-\alpha g(\bar x)L_gV(\bar x),
\end{equation}
where $L_gV=\nabla V^Tg$.
Hence, \eqref{cont_vib} approximates the control law $u_{L_gV}(x)=-\alpha L_gV(x)$, which is sometimes called damping- or $L_g V$-control law.
An interesting feature of the ``vibrational'' control law \eqref{cont_vib} is
that it only relies on the values of the control Lyapunov function, and neither the vector field $g$
nor the gradient of $V$ is needed to implement this control law. Such controls find many applications, e.g., in adaptive control~\cite{Nus,SK13}.
 Similarly to Section~\ref{Main}, we can construct more general control laws of the form
\begin{equation}\label{class_vib}
 u=F_1(V(x))\sqrt\omega u_1(\omega t)+ 2\alpha F_2(V(x))\sqrt\omega u_2(\omega t),
\end{equation}
 where $F_1,F_2$, satisfy relation~\eqref{class} with $F_0(z)= \alpha>0$, and $u_1,u_2$ satisfy the assumptions
made in Theorem~\ref{thm_appr}.
It is easy to verify that the corresponding Lie bracket system for \eqref{class_vib} coincides with~\eqref{lie_vib},
so that formula~\eqref{class} allows to define a class of vibrational control laws which
approximate the $L_gV$-control laws and stabilize nonlinear systems of the form~\eqref{sys_vib}
using only the values of the control Lyapunov function $V$.  {The  approaches for constructing control  Lyapunov functions in case of unknown $f$, $g$ are proposed, e.g., in~\cite{SK13}.}
Notice that the control Lyapunov functions are positive definite and hence
predestinated to apply formulas with bounded update rate and vanishing amplitudes
as discussed in Section~\ref{Main}.
For a simple illustration, consider the equation
\begin{equation}\label{ex_vib}
  \dot x= x+\mu u,
\end{equation}
where $x\in\mathbb R$ and $\mu\in\mathbb R$ is an unknown parameter, $|\mu|\ge1$. We take the control Lyapunov function $V(x)=x^2$, and $u_{L_gV}(x)=-2\alpha\mu x$, $\alpha>0.5$.
 The evolution of the solution  of system~\eqref{ex_vib} with the control law~\eqref{cont_vib} and  controls of the type~\eqref{Ra}, \eqref{V2},  and the initial condition $x(0)=1$ is presented on Fig.~\ref{fig_vibro}. 

\section{Conclusions}
In this paper, we have proposed a new formula for constructing a class of vector fields to approximate gradient-like flows
based on the Lie bracket approximation idea.  We have shown how this formula gives rise to a broad class of controls for the extremum seeking and vibrational stabilization problems.
It generalizes and unifies some existing results and gives an opportunity for the design of new control functions. While the formula looks rather simple, we believe that  it
potentially comprises more applications than the ones  discussed in this paper. In particular, although we assume the extremum seeking system to have the single integrator dynamics, it is also possible to apply the obtained formula for systems with more
complicated dynamics, e.g. using the approach proposed in~\cite{Durr17}. Besides, this result is of  use for the vibrational stabilization problems with known control Lyapunov functions.
Furthermore, from a conceptual point of view, 
we have presented a novel approach to the proof of  stability properties of  extremum seeking systems. This approach gives several advantages compared  to the existing results. First, the proofs of the main results present a constructive procedure for
defining the frequencies of the control functions for ensuring the practical asymptotic stability; second, {the practical \textit{exponential} stability is proven for certain cost functions}. Finally, the main advantage of the developed approach are
conditions for the asymptotic and exponential stability \textit{in the sense of Lyapunov} for extremum seeking systems whose vector fields satisfy certain additional requirements. An important step in the proof of this result concerns novel decay rate
estimates for the cost function along the solutions of the obtained extremum seeking system. Besides, some auxiliary results of this paper (in particular, Lemmas~2--5) extend the results of~\cite{ZuSIAM, ZGB} and can be exploited
in other control problems, e.g., asymptotic stabilization of nonholonomic systems when the exponential stabilization is not possible.
\bibliographystyle{plain}        
\bibliography{biblio_es}           
\newpage
\begin{appendix}
\section{Proofs}~\label{proofs}
\subsection{Preliminary results}
Without loss of generality, throughout this section we assume $J^*=0$.
An important step of the proof is the representation of solutions of system~\eqref{int_n} with initial conditions $x(0)=x^0\in D\subseteq\mathbb R^n$  by using the Volterra series~\cite{La95,ZuSIAM}. Before proving   Theorem~\ref{thm_class}, we need
to state several auxiliary results. Since they can be used not only in the extremum seeking problem, we formulate them for a general system
 \begin{equation}\label{nonh}
 \dot x=\sum_{i=1}^{\ell}f_i(x)u_i(t), \quad x\in D\subseteq\mathbb R^n, \;f_i:D\to\mathbb R^n.
 \end{equation}
\begin{lemma}\label{volterra}
{ Let the vector fields $f_i$ be Lipschitz continuous in a domain $D\subseteq\mathbb R^n$, and $f_i\in C^2(D\setminus\Xi;\mathbb R)$, where $\Xi=\{x\in D:f_i(x)=0\text{ for all }1\le i\le \ell\}$. Assume, moreover, that $L_{f_j}f_i,L_{f_l}L_{f_j}f_i\in
C(D;\mathbb R^n)$,   for all $i,j,l=\overline{1,\ell}$. If $x(t)\in D$, $t\in[0,\tau]$, is a  solution of system~\eqref{nonh} with $u\in C([0,\tau];\mathbb R^m)$ and $x(0)=x^0\in D$, then  $x(t)$ can be represented by the Volterra series:}
\begin{align}
   & x(t){=}x^0{+}{\sum_{i=1}^{\ell}}f_i(x^0)\int\limits_0^t u_{i}(v)dv  {+}\sum_{\hspace{-0.75em}i,j=1}^{\ell}L_{f_j}f_i(x^0)\int\limits_0^t\int\limits_0^v  u_{i}(v) u_{j}(s)dsdv+R(t),\,t\in[0,\tau]\label{volt1}\\
&\text{where    }R(t){=}{\sum\limits_{\hspace{-0.75em}i,j,l=1}^{\ell}}{\int\limits_0^t}{\int\limits_0^v}{\int\limits_0^s}L_{f_l}L_{f_j}f_i(x(p))  u_{i}(v)u_{j}(s)u_{l}(p)dpdsdv \nonumber
    \end{align}
     is the remainder of the Volterra series expansion.
     \end{lemma}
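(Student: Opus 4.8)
The plan is to derive the representation by iterating the integral form of the solution twice, i.e.\ to build the second-order Chen--Fliess (Volterra) expansion with an exact integral remainder. Since $x(\cdot)$ solves \eqref{nonh} with continuous inputs on $[0,\tau]$ and the $f_i$ are continuous, $\dot x(t)=\sum_{i} f_i(x(t))u_i(t)$ is continuous, so $x\in C^1([0,\tau])$; moreover $x$ is Lipschitz on $[0,\tau]$ because $\dot x$ is bounded there. Starting from $x(t)=x^0+\sum_i\int_0^t f_i(x(v))u_i(v)\,dv$, I would substitute the first-order expansion of $f_i(x(v))$ and then of $L_{f_j}f_i(x(s))$, and finally collect terms and interchange the order of integration (Fubini applies since all integrands are continuous, hence bounded, on the compact simplices). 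The two expansions I need are the ``fundamental theorem of calculus along the trajectory'' identities
\begin{align*}
f_i(x(v))&=f_i(x^0)+\int_0^v\sum_{j}L_{f_j}f_i(x(s))u_j(s)\,ds,\\
L_{f_j}f_i(x(s))&=L_{f_j}f_i(x^0)+\int_0^s\sum_{l}L_{f_l}L_{f_j}f_i(x(p))u_l(p)\,dp.
\end{align*}
Substituting the first into the integral form yields the constant term, the first-order term, and a double integral of $L_{f_j}f_i(x(s))$; substituting the second into that double integral produces the second-order term with frozen argument $x^0$ together with exactly the triple-integral remainder $R(t)$.

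The only delicate point is that $f_i$ need not be differentiable on the set $\Xi$, so the chain rule underlying the two identities above is not immediate where the trajectory meets $\Xi$. The key observation that removes this difficulty is that, directly from the definition $L_g f(x)=\lim_{s\to0}\tfrac{f(x+sg(x))-f(x)}{s}$, every first-order Lie derivative along the $f_j$ vanishes on $\Xi$: if $\bar x\in\Xi$ then $f_j(\bar x)=0$, so the difference quotient is identically zero and $L_{f_j}f_i(\bar x)=0$; applying this twice gives $L_{f_l}L_{f_j}f_i(\bar x)=0$ as well. Together with the standing assumption that $L_{f_j}f_i$ and $L_{f_l}L_{f_j}f_i$ are continuous on all of $D$, this means these Lie derivatives extend continuously across $\Xi$ with value $0$ there.

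With this in hand I would justify the two identities as follows. Set $E=\{s\in[0,\tau]:x(s)\in\Xi\}$ and $g_i=f_i\circ x$. As a composition of Lipschitz maps, $g_i$ is Lipschitz, hence absolutely continuous. On the open set $[0,\tau]\setminus E$ the function $f_i$ is $C^2$ along the trajectory, so the ordinary chain rule gives $g_i'(s)=\sum_j L_{f_j}f_i(x(s))u_j(s)=:\psi_{1}(s)$, which is continuous on all of $[0,\tau]$. On $E$ we have $g_i\equiv0$ (since $f_i=0$ on $\Xi$), so $g_i'=0$ a.e.\ on $E$, while $\psi_1=0$ on $E$ by the vanishing of $L_{f_j}f_i$ on $\Xi$; hence $g_i'=\psi_1$ a.e.\ and integrating recovers the first identity. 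The second identity is analogous with $\phi:=L_{f_j}f_i$ in place of $f_i$: the candidate derivative $\psi_2(s)=\sum_l L_{f_l}L_{f_j}f_i(x(s))u_l(s)$ is continuous, hence bounded, on $[0,\tau]$, and since $\phi\equiv0$ on $\Xi$ one checks that $\phi\circ x$ is Lipschitz (estimating increments across $E$ using $\phi(x(\cdot))=0$ at the boundary points of $E$), so $\phi\circ x$ is absolutely continuous with a.e.\ derivative equal to $\psi_2$ off $E$ and to $0$ on $E$, where $\psi_2$ also vanishes; integration gives the second identity.

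The main obstacle is exactly this treatment of the crossing set $\Xi$: one must ensure that the twice-iterated chain rule remains valid despite $f_i\notin C^2$ there. I expect everything else---the substitutions, the collection of terms, and the Fubini interchange to obtain the nested triple integral $R(t)$---to be routine bookkeeping once the two ``fundamental theorem'' identities are established.
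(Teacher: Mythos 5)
Your proof is correct, but at the one delicate point --- the set $\Xi$ --- it takes a genuinely different route from the paper's. The paper dispatches $\Xi$ by an invariance dichotomy: since the $f_i$ are Lipschitz and $u$ is continuous (hence bounded) on $[0,\tau]$, solutions of \eqref{nonh} are unique, and every point of $\Xi$ is an equilibrium; therefore if $x(t^*)\in\Xi$ for some $t^*$ then $x(\cdot)$ coincides with the constant solution, so either $x(t)\equiv x^0\in\Xi$ for all $t$ (and \eqref{volt1} holds trivially, every term vanishing) or $x(t)\notin\Xi$ on all of $[0,\tau]$, in which case the trajectory remains in the region where $f_i\in C^2$ and the twice-iterated fundamental-theorem identities require no special care. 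You instead allow the trajectory to meet or cross $\Xi$ and prove the two identities in the absolutely continuous sense, using the (correct, and not needed by the paper) observation that with the paper's definition of $L_g$ both $L_{f_j}f_i$ and $L_{f_l}L_{f_j}f_i$ vanish on $\Xi$, and then verifying that $f_i\circ x$ and $(L_{f_j}f_i)\circ x$ are Lipschitz with almost-everywhere derivatives matching the continuous candidates $\psi_1,\psi_2$; your increment estimate across $E$ via the first and last entry times into $E$, together with the standard fact that a Lipschitz function vanishing on a set has zero derivative at a.e.\ point of that set, closes the argument. What each approach buys: the paper's dichotomy is shorter and leverages uniqueness, which is available under the stated hypotheses; your argument never invokes uniqueness, so it applies verbatim to any individual solution even in settings where trajectories through $\Xi$ could branch, at the price of a more technical real-analysis verification. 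Both establish \eqref{volt1} under the lemma's hypotheses, and the bookkeeping steps (substitution and Fubini on compact simplices with continuous integrands) are routine in both, as you say.
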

\begin{proof}
  The validity of~\eqref{volt1} is justified, e.g., in~\cite{La95} for analytic vector fields $f_i$. Recall that $f_i$ are Lipschitz continuous in $D$, and $u\in C([0,\tau];\mathbb R^m)$, therefore, $u$ is bounded in on $[0,\tau]$. Hence, the uniqueness
  of the solution to the Cauchy problem implies that the set $\Xi$ is strongly invariant under our assumptions, i.e. if $x(t)$ is a solution of~\eqref{nonh}  with some control $u\in C([0,\tau];\mathbb R^m)$ and $x(t^*)\in\Xi$ for some $t^*\in[0,\tau]$
  then $x(t)\in\Xi$ for all $t\in[0,\tau]$ (and $x(t)\equiv{\rm const}$ because of the definition of $\Xi$). These arguments show that either $x(t)\in\Xi$ (so that the representation~\eqref{volt1} is valid with all $f_i(x^0)$, $L_{f_j}f_i(x^0)$,
  $L_{f_l}L_{f_j}f_i(x(p))$ being zero), or $x(t)\notin\Xi$ for all $t\in[0,\tau]$. In the latter case we apply the fundamental theorem of calculus to see that
\begin{align*}
f_i(x(v))&=f_i(x^0){+}\int_0^v\frac{d f_i(x(s))}{d s}ds\\
&=f_i(x^0){+}\int_0^v\sum_{j=1}^m L_{f_j}f_i(x(s))u_j(s)ds.
\end{align*}
 Applying the same procedure to $L_{f_j}f_i(x(s))$ and  representing $x(t){=}x^0{+}{\sum\limits_{i=1}^{\ell}}{\int_0^t}f_i(x(v))u_i(v)dv$, we finally obtain~\eqref{volt1} for all solutions of system~\eqref{nonh} in $D\setminus\Xi$.
\end{proof}
 \begin{lemma}~\label{lemma_x}
 Let $D\subseteq\mathbb R^n$, $x^*\in D$, $x(t)\in D$, $0\le t\le\tau$, be a  solution of system~\eqref{nonh}. Assume that there exist  $m\ge0$, $\tilde M ,L>0$ such that
\begin{align*}
\|f_i(x)\|\le  \tilde M\|x-x^*\|^m,\,
\|f_i(x)-f_i(y)\|\le  L\|x-y\|,
\end{align*}
for all $x,y\in D$, $i{=}\overline{1,\ell}$.
    Then
    \begin{equation}\label{est_Z}
      \|x(t)-x(0)\|{\le}\tfrac{\tilde M\|x(0)-x^*\|^m}{ L}(e^{\nu Lt}{-}1),\;t{\in}[0,\tau],
    \end{equation}
    with $\nu=\max\limits_{t\in[0,\tau]}\sum_{i=1}^{\ell}|u_{i}(t)|$.
  \end{lemma}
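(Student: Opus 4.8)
The plan is to turn the integral form of the solution into a scalar integral inequality for the quantity $y(t):=\|x(t)-x(0)\|$ and then close it by a Grönwall-type comparison argument. First I would integrate system~\eqref{nonh} to write $x(t)-x(0)=\int_0^t\sum_{i=1}^{\ell}f_i(x(s))u_i(s)\,ds$ and take norms, obtaining $y(t)\le\int_0^t\sum_{i=1}^{\ell}\|f_i(x(s))\|\,|u_i(s)|\,ds$. Since the hypotheses guarantee $x(s)\in D$ for all $s\in[0,\tau]$, both the growth bound and the Lipschitz bound are available along the whole trajectory, and by definition $\sum_{i=1}^{\ell}|u_i(s)|\le\nu$ on $[0,\tau]$.

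The key idea is the decomposition $f_i(x(s))=f_i(x(0))+\big(f_i(x(s))-f_i(x(0))\big)$: the first term is estimated by the growth bound \emph{at the fixed point} $x(0)$, giving $\|f_i(x(0))\|\le\tilde M\|x(0)-x^*\|^m$, while the second is estimated by the Lipschitz bound as $\|f_i(x(s))-f_i(x(0))\|\le L\,y(s)$. Using this decomposition avoids having to expand $(\|x(0)-x^*\|+y(s))^m$ and keeps the dependence on $y(s)$ \emph{linear}. Combining with $\sum_i|u_i(s)|\le\nu$ yields
$$y(t)\le\nu\tilde M\|x(0)-x^*\|^m\,t+\nu L\int_0^t y(s)\,ds,$$
which, writing $a=\nu\tilde M\|x(0)-x^*\|^m$ and $b=\nu L$, is of the form $y(t)\le at+b\int_0^t y(s)\,ds$.

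To extract the \emph{sharp} constant in~\eqref{est_Z} I would set $R(t)=\int_0^t y(s)\,ds$, so that $R(0)=0$ and the inequality becomes the linear differential inequality $R'(t)-bR(t)\le at$. Multiplying by the integrating factor $e^{-bt}$ gives $\big(e^{-bt}R(t)\big)'\le a t e^{-bt}$, and integrating from $0$ to $t$ produces $R(t)\le a\big(\tfrac{e^{bt}-1}{b^2}-\tfrac{t}{b}\big)$. Substituting back into $y(t)\le at+bR(t)$ the $at$ terms cancel and one is left with $y(t)\le\tfrac{a}{b}(e^{bt}-1)$; restoring $a/b=\tilde M\|x(0)-x^*\|^m/L$ and $b=\nu L$ gives precisely~\eqref{est_Z}.

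I do not expect a serious obstacle, since this is a standard Grönwall estimate and $y$ is continuous (hence $R$ is $C^1$). The one point demanding care is the constant: the coarse version of Grönwall, which only uses that $at$ is nondecreasing, would give the weaker bound $at\,e^{bt}$, and one checks that $\tfrac{e^{bt}-1}{b}\le t e^{bt}$, so that estimate is strictly larger than the claimed $\tfrac{a}{b}(e^{bt}-1)$. Hence I would use the full integrating-factor computation above (equivalently, note that $g(t)=\tfrac{a}{b}(e^{bt}-1)$ solves the associated integral equation with equality and invoke the comparison principle) rather than the coarse version, in order to recover the exact constant stated in the lemma.
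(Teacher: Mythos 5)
Your proof is correct and takes essentially the same route as the paper's: the paper itself only defers to the analogous \cite[Lemma 4.1]{ZuSIAM}, whose argument is precisely your decomposition $f_i(x(s))=f_i(x(0))+\bigl(f_i(x(s))-f_i(x(0))\bigr)$ (growth bound at the frozen point, Lipschitz bound on the increment) followed by the integrating-factor Gr\"onwall computation that produces the constant $\tfrac{\tilde M\|x(0)-x^*\|^m}{L}\bigl(e^{\nu Lt}-1\bigr)$ in~\eqref{est_Z}. Your closing remark is also well taken: the coarse Gr\"onwall bound $at\,e^{bt}$ is strictly weaker, so the full comparison with the solution of the associated linear problem is indeed needed to recover the stated sharp form.
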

  The proof  is analogous to the proof of~\cite[Lemma 4.1]{ZuSIAM}. {Note that Lemma~\ref{lemma_x} complements \cite[Lemma 4.1]{ZuSIAM} stating that  $\|x(t)-x(0)\|\to 0$  as $x(0)\to x^*$.}
  \begin{lemma}~\label{lemma_r}
{   Let the conditions of Lemma~\ref{lemma_x} be satisfied. Assume that $\nu\tau<1$ and there exist $\tilde H\ge0$, $\varpi\in\{0\}\cup[1,\infty)$ such that
$$
{\sum\limits_{\hspace{-0.75em}i,j,l=1}^{\ell}}\|L_{f_l}L_{f_j}f_i(x)\|\le \tilde H\|x-x^*\|^{\varpi}\text{ for all } x\in D.
$$
Then, for all $t\in[0,\tau]$, the remainder $R(t)$ of the Volterra expansion~\eqref{volt1} of $x(t)$ satisfies the estimate $$\|R(t)\|\le (t\nu)^3\|x(0)-x^*\|^\varpi C(t\nu,  \|x(0){-}x^*\|),$$
$C(t\nu, \|x(0)-x^*\|){=}2^{\varpi{-}1}\tilde H\big(1{+}c_1(\nu t \|x(0){-}x^*\|^{m-1})^\varpi\big)$, \\ $c_1=\tfrac{6(\tilde M(e^L-1))^{\varpi}}{L(\varpi{+}1)(\varpi{+}2)(\varpi{+}3)}$.}
  \end{lemma}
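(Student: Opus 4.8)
The plan is to start from the explicit integral formula for the remainder furnished by the Volterra expansion in Lemma~\ref{volterra}, namely
$$R(t)=\sum_{i,j,l=1}^{\ell}\int_0^t\int_0^v\int_0^s L_{f_l}L_{f_j}f_i(x(p))\,u_i(v)u_j(s)u_l(p)\,dp\,ds\,dv,$$
and to estimate it pointwise before integrating. First I would take norms inside the triple integral and the triple sum. Since each input satisfies $|u_i(\cdot)|\le\sum_{k}|u_k(\cdot)|\le\nu$, every product $|u_i(v)||u_j(s)||u_l(p)|$ is bounded by $\nu^3$; pulling this factor out and invoking the standing hypothesis $\sum_{i,j,l}\|L_{f_l}L_{f_j}f_i(x)\|\le\tilde H\|x-x^*\|^{\varpi}$ yields
$$\|R(t)\|\le \nu^3\tilde H\int_0^t\int_0^v\int_0^s\|x(p)-x^*\|^{\varpi}\,dp\,ds\,dv.$$
This reduces the estimate to controlling the scalar quantity $\|x(p)-x^*\|^{\varpi}$ along the trajectory.

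Next I would bound $\|x(p)-x^*\|$ by combining Lemma~\ref{lemma_x} with the triangle inequality: writing $r_0=\|x(0)-x^*\|$, one gets $\|x(p)-x^*\|\le r_0+\tfrac{\tilde M r_0^{m}}{L}(e^{\nu Lp}-1)$. Raising this to the power $\varpi$, I would apply the elementary inequality $(a+b)^{\varpi}\le 2^{\varpi-1}(a^{\varpi}+b^{\varpi})$, which is precisely why the hypothesis restricts $\varpi\in\{0\}\cup[1,\infty)$ (the inequality reverses for $\varpi\in(0,1)$, and holds with equality at $\varpi=0$). The exponential term is then linearized by the convexity (chord) bound $e^{x}-1\le\tfrac{e^{L}-1}{L}x$ on $[0,L]$, evaluated at $x=\nu Lp$, giving $e^{\nu Lp}-1\le(e^{L}-1)\nu p$; here the assumption $\nu\tau<1$ is used to guarantee $\nu Lp\le\nu L\tau<L$, so the argument stays in the admissible interval. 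This produces a pointwise estimate of the form
$$\|x(p)-x^*\|^{\varpi}\le 2^{\varpi-1}\Big(r_0^{\varpi}+\big(\tfrac{\tilde M(e^{L}-1)}{L}\big)^{\varpi}r_0^{m\varpi}\nu^{\varpi}p^{\varpi}\Big).$$

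Finally I would substitute this into the reduced integral and integrate the two resulting monomials over the simplex $0\le p\le s\le v\le t$: the constant term integrates to $t^3/6$, while the factor $p^{\varpi}$ yields the iterated-integral constant $\tfrac{t^{\varpi+3}}{(\varpi+1)(\varpi+2)(\varpi+3)}$. Multiplying back by $\nu^3\tilde H$ and factoring out $(t\nu)^3 r_0^{\varpi}$ reproduces the stated structure, with the leading factor $2^{\varpi-1}\tilde H$, the factorial-type denominator $(\varpi+1)(\varpi+2)(\varpi+3)$, and the factor $(\tilde M(e^{L}-1))^{\varpi}$ appearing in $c_1$ (the harmless constant $1/6$ being absorbed into the leading term, since $1/6\le 1$). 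I expect the main obstacle to be the bookkeeping in the first step: the triple sum over $i,j,l$ does not factor, because the three inputs are evaluated at three different times, so the uniform bound $|u_i||u_j||u_l|\le\nu^3$ is exactly what makes the hypothesis on $\sum\|L_{f_l}L_{f_j}f_i\|$ directly applicable. One must additionally treat the degenerate case $\varpi=0$ explicitly (where the second-Lie-derivative bound is merely a constant and the second monomial drops out) and verify that the chord bound is invoked only on $[0,L]$, which is precisely what the condition $\nu\tau<1$ secures.
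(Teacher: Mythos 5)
Your proposal is correct and follows essentially the same route as the paper's proof: pull $\nu^3$ out of the triple integral, apply the hypothesis on $\sum_{i,j,l}\|L_{f_l}L_{f_j}f_i\|$, split $\|x(p)-x^*\|^\varpi$ via the triangle inequality and $(a+b)^\varpi\le 2^{\varpi-1}(a^\varpi+b^\varpi)$, bound $\|x(p)-x(0)\|$ by Lemma~\ref{lemma_x}, linearize $e^{\nu Lp}-1\le \nu p(e^L-1)$ using $\nu\tau<1$, and integrate over the simplex. If anything, your $p$-dependent integration yielding the factor $t^{\varpi+3}/((\varpi+1)(\varpi+2)(\varpi+3))$ is the more careful reading (the paper's displayed chain uses the cruder $t$-uniform bound $(e^{\nu Lt}-1)^\varpi$, apparently a slip), and it is precisely what produces the denominator $(\varpi+1)(\varpi+2)(\varpi+3)$ in the stated constant $c_1$.
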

\begin{proof}
{ From~\eqref{volt1} and Lemma~\ref{lemma_x},
\begin{align*}
&\|R(t)\|\le \nu^3{\sum\limits_{\hspace{-0.75em}i,j,l=1}^{\ell}}{\int\limits_0^t}{\int\limits_0^\tau}{\int\limits_0^s}\bigg\|L_{f_l}L_{f_j}f_i(x(p))\bigg\|dpdsd\tau\\
& \le  \nu^3H{\int\limits_0^t}{\int\limits_0^\tau}{\int\limits_0^s}\|x(p)-x^*\|^{\varpi}dpdsd\tau\\
& \le  \nu^3H2^{\varpi{-}1}{\int\limits_0^t}{\int\limits_0^\tau}{\int\limits_0^s}\big(\|x(0){-}x^*\|^{\varpi}{+}\|x(p){-}x(0)\|^{\varpi}\big)dpdsd\tau\\
&\le \nu^3H2^{\varpi{-}1}\|x(0){-}x^*\|^{\varpi}{\int\limits_0^t}{\int\limits_0^\tau}{\int\limits_0^s}\big(1{+}\Big(\tfrac{\tilde M}{L}\Big)^\varpi\|x(0){-}x^*\|^{\varpi(m-1)}(e^{\nu Lt}{-}1)^\varpi\big)dpdsd\tau.
\end{align*}
The estimation of $e^{\nu Lt}{-}1\le \nu t(e^L-1)$ for $\nu t\le 1$ and computation of the integrals complete the proof.}
\end{proof}
\begin{lemma}~\label{lemma_v}
    Let $D\subseteq\mathbb R^n$ be a bounded convex domain, $V,h_i:D\to\mathbb R$, $i=\overline{1,n}$, $V\in C^2(D;\mathbb R)$, $x^*\in D$, and let the following inequalities hold:
   $$
   \begin{aligned}
    & \gamma_1\|x-x^*\|^{2m_1} \le V(x)\le \gamma_2\|x-x^* \|^{2m_1},\\
    &  \kappa_1 V(x)^{2-\frac{1}{m_1}}\le \|\nabla V(x)\|^2\le\kappa_2 V(x)^{2-\frac{1}{m_1}},\\
    &\left\|\frac{\partial^2 V(x)}{\partial x^2}\right\|\le\mu V(x)^{1-\frac{1}{m_1}},\\
     &  \alpha_1V(x)^{m_2}\le h_i(x)\le \alpha_2 V(x)^{m_2},\text{ for all }x\in D,
    \end{aligned}
    $$
   where $m_1\ge1$, {$m_2\ge \tfrac{1}{m_1}-1$}, and $\alpha_1,\alpha_2,\gamma_1,\gamma_2,\kappa_1,\kappa_2,\mu$ are positive constants.
   Then, for any $x^0=x(0)\in D\setminus\{x^*\}$ and any function $x:[0,\varepsilon]\to D$  satisfying the conditions
    \begin{equation*}
   x(0){=}x^0,\;   x(\varepsilon){=}x^0{-}\varepsilon\sum_{i{=}1}^n\frac{\partial V(x^0)}{\partial x_i}h_i(x^0)e_i{+}r_\varepsilon,\,r_\varepsilon{\in}\mathbb R^n,
    \end{equation*}
{the function $V$ satisfies the estimate:
 \begin{align*}
V(x(\varepsilon)){\le} V(x^0)\Big(1{-}\frac{\varepsilon\varkappa_1}{m_1}V^{\tilde m}(x^0){+}\frac{\varepsilon^2\varkappa_2}{2m_1^2}V^{2\tilde m}(x^0)\Big)^{m_1},
 \end{align*}}
where $\tilde m = 1+m_2-\tfrac{1}{m_1}$, $\varkappa_1=\alpha_1\kappa_1-\frac{\sqrt{\kappa_2}\|r_\varepsilon\|}{\varepsilon V^{\tilde m+\tfrac{1}{2m_1}}(x^0)}$,  \\
$\varkappa_2=((m_1-1)\kappa_2+\mu m_1)\bigg(\alpha_2\sqrt{\kappa_2}+\frac{\|r_\varepsilon\|}{\varepsilon V^{\tilde m+\tfrac{1}{2m_1}}(x^0)}\bigg)^2$.
  \end{lemma}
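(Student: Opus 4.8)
The plan is to reduce the whole estimate to a single renormalized function $W:=V^{1/m_1}$ and to exploit the fact that, while $V$ itself has a relative Hessian that blows up near $x^*$, the function $W$ has a Hessian bounded by an \emph{absolute constant}. This is precisely what creates the $(\,\cdot\,)^{m_1}$ structure on the right-hand side. Writing $p:=1/m_1\in(0,1]$, one computes $\nabla^2 W = p(p-1)V^{p-2}\,\nabla V\,\nabla V^T + p\,V^{p-1}\,\nabla^2 V$, so that, using the gradient and Hessian bounds of the hypotheses together with $|p(p-1)|=\tfrac{m_1-1}{m_1^2}$ (valid since $m_1\ge1$),
\[
\Big\|\nabla^2 W(x)\Big\|\le \frac{(m_1-1)\kappa_2}{m_1^2}\,V^{p-2}V^{2-\frac1{m_1}}+\frac{\mu}{m_1}\,V^{p-1}V^{1-\frac1{m_1}}=\frac{(m_1-1)\kappa_2+\mu m_1}{m_1^2}=:c
\]
for all $x\in D\setminus\{x^*\}$. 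The two summands of $c$ are exactly the two sources of the first factor of $\varkappa_2$.

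Before expanding I would settle the regularity of $W$. Since $V\sim\|x-x^*\|^{2m_1}$ near $x^*$, one gets $\nabla W\sim\|x-x^*\|\to0$, so $W\in C^1(D)$ with $\nabla W(x^*)=0$; combined with the uniform bound $\|\nabla^2 W\|\le c$ on $D\setminus\{x^*\}$ this shows that $\nabla W$ is globally Lipschitz on the convex set $D$, i.e. $W\in C^{1,1}(D)$. Consequently the second-order bound $W(y)\le W(x^0)+\nabla W(x^0)^T(y-x^0)+\tfrac{c}{2}\|y-x^0\|^2$ holds for $y=x(\varepsilon)$, the segment $\mathrm{co}\{x^0,x(\varepsilon)\}$ staying in $D$ by convexity. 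Circumventing the assumption that $W$ be twice differentiable \emph{at} $x^*$ is the one genuinely delicate point, and the $C^{1,1}$ estimate is what resolves it.

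Next I would estimate the two terms with $\Delta x:=x(\varepsilon)-x^0=-\varepsilon\sum_i\partial_i V(x^0)h_i(x^0)e_i+r_\varepsilon$ and $V_0:=V(x^0)$. For the linear part, $\nabla W(x^0)^T\Delta x=\tfrac1{m_1}V_0^{\frac1{m_1}-1}\nabla V(x^0)^T\Delta x$; expanding $\nabla V(x^0)^T\Delta x=-\varepsilon\sum_i(\partial_iV_0)^2h_i(x^0)+\nabla V(x^0)^Tr_\varepsilon$, using $h_i\ge\alpha_1V_0^{m_2}$ and $\|\nabla V_0\|^2\ge\kappa_1V_0^{2-1/m_1}$ on the first summand, and Cauchy--Schwarz with $\|\nabla V_0\|\le\sqrt{\kappa_2}V_0^{1-1/(2m_1)}$ on the second, gives $\nabla V(x^0)^T\Delta x\le-\varepsilon\varkappa_1V_0^{\tilde m+1}$, hence $\nabla W(x^0)^T\Delta x\le-\tfrac{\varepsilon\varkappa_1}{m_1}W(x^0)V_0^{\tilde m}$. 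For the quadratic part I would use $h_i\le\alpha_2V_0^{m_2}$ and the upper gradient bound to get $\|\Delta x\|\le\varepsilon\alpha_2\sqrt{\kappa_2}\,V_0^{\tilde m+\frac1{2m_1}}+\|r_\varepsilon\|$, so that $\tfrac{c}{2}\|\Delta x\|^2\le\tfrac{\varepsilon^2\varkappa_2}{2m_1^2}W(x^0)V_0^{2\tilde m}$; here the exponent bookkeeping rests on $\tilde m=1+m_2-\tfrac1{m_1}$.

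Adding the two bounds yields
\[
W(x(\varepsilon))\le W(x^0)\Big(1-\frac{\varepsilon\varkappa_1}{m_1}V_0^{\tilde m}+\frac{\varepsilon^2\varkappa_2}{2m_1^2}V_0^{2\tilde m}\Big),
\]
and since both sides are nonnegative while $t\mapsto t^{m_1}$ is nondecreasing on $[0,\infty)$ for $m_1\ge1$, raising to the power $m_1$ and using $W^{m_1}=V$ delivers the claimed estimate. The main obstacle is the regularity of $W$ at $x^*$ flagged above; everything else is the (routine but unforgiving) exponent arithmetic confirming that $\tilde m$, $\varkappa_1$ and $\varkappa_2$ emerge exactly as stated.
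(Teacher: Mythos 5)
Your proposal is correct and is essentially the paper's own argument: the paper likewise renormalizes via $V^{1/m_1}$, setting $v(\theta)=V^{1/m_1}(x^0+\theta y)$ along the segment, showing $v\in C^1([0,1])$ with $v'$ Lipschitz of constant $\bar L=\tfrac{(m_1-1)\kappa_2+\mu m_1}{m_1^2}\|y\|^2$ (exactly your $c\|y\|^2$), handling the possible passage through $x^*$ as a removable singularity just as you do, and integrating to get $v(1)\le v(0)+v'(0)+\bar L/2$ --- i.e., the one-dimensional restriction of your global $C^{1,1}$ descent-lemma formulation. Your exponent bookkeeping and the resulting $\varkappa_1,\varkappa_2$ coincide with the paper's.
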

\begin{proof}
 {
For  $x^0{\in }D\setminus\{x^*\}$, we
  denote  $$y=x(\varepsilon)-x^0=-\varepsilon\sum_{i=1}^n\frac{\partial V(x^0)}{\partial x_i}h_i(x^0)e_i+r_\varepsilon,$$
  and introduce the function $v(\theta)=V^{\tfrac{1}{m_1}}(x^0+\theta y)$, $\theta\in[0,1]$. Note that $x^0+\theta y\in D$ for all $\theta\in[0,1]$ since $D$ is convex, so the above $v(\theta)$ is well-defined.
  Straightforward computations show that
  $$
   v'(\theta)=\frac{1}{m_1}V^{\tfrac{1}{m_1}-1}(x)\sum\limits_{i=1}^n\frac{\partial V}{\partial x_i}y_i\Big|_{x=x^0+\theta y}\text{ if }x^0+\theta y\ne x^*.
  $$
 In case $x^0+\theta^*y=x^*$ for some $\theta^*\in(0,1]$, we see that $ v'(\theta^*)=\lim\limits_{\theta\to\theta^*}\frac{v(\theta)-v(\theta^*)}{\theta-\theta^*}=0$ and $\lim\limits_{\theta\to\theta^*} v'(\theta)=0$ under the assumptions of this lemma. Thus we have shown that $v\in C^1([0,1];\mathbb  R^+)$. The next step is to prove that the function $w(\theta)=v'(\theta)$ is Lipschitz continuous on $[0,1]$. Indeed, if $x^*\notin{\rm co}\{x^0,x(\varepsilon)\}$, then $w\in C^1([0,1];\mathbb  R^+)$, and
\begin{align*}
  w'(\theta)=\left.\frac{V^{\tfrac{1}{m_1}-1}(x)}{m_1}\left(\frac{1-m_1}{m_1V(x)}\Big(\sum\limits_{i=1}^n\frac{\partial V(x)}{\partial x_i}y_i\Big)^2+\sum_{i,j=1}^n\frac{\partial^2 V(x)}{\partial x_i\partial x_j}y_iy_j\right)\right|_{x=x^0+\theta y}.
\end{align*}
  By exploiting the assumptions of this lemma, we conclude that $|w'(\theta)|\le \bar L$ for all $\theta\in[0,1]$, where $\bar L=\frac{(m_1-1)\kappa_2+\mu m_1}{m_1^2}\|y\|^2$.
  If $x^*=x^0+\theta^* y$ for $\theta^*\in(0,1]$, then $w'(\theta)$ is continuous at each point $\theta\in[0,1]\setminus\{\theta^*\}$, and $|w'(\theta)|\le\bar L$ for all $\theta\in[0,1]\setminus\{\theta^*\}$. These arguments imply that $w(\cdot)$ is Lipschitz continuous, so that $w(\theta)\le w(0)+\bar L\theta$ for all $\theta\in[0,1]$. By integrating the above inequality, we get:
  \begin{align*}
    v(\theta)=&v(0)+\int_0^\theta w(s) ds\le v(0)+\int_0^\theta(w(0)+\bar Ls)ds\\
    =&v(0)+v'(0)\theta+\frac{\bar L}{2}\theta^2.
  \end{align*}
  In particular, for $\theta=1$ with regard for the definition of $v$, we have:
    \begin{align*}
        V^{\tfrac{1}{m_1}}(x(\varepsilon))\le V^{\tfrac{1}{m_1}}(x^0){+}\frac{1}{m_1}V^{\tfrac{1}{m_1}{-}1}(x^0)\sum_{i=1}^n\frac{\partial V(x^0)}{\partial x_i}y_i+\frac{\bar L}{2}.
    \end{align*}
    Then the assertion of Lemma~\ref{lemma_v} follows from the above estimate by exploiting the assumptions on $\|\nabla V(x)\|$, $\left\|\frac{\partial^2 V(x)}{\partial x^2}\right\|$, $h_i(x)$, and the definition of $\bar L$.}
   %
\end{proof}
{Lemma~\ref{lemma_v} provides an extension of~\cite[Lemma~3.2]{ZuSIAM} from $\tilde m=0 $ to an arbitrary $\tilde m\ge0$.}
\subsection{{{Proof of Theorem~\ref{thm_class}}}}
{The idea of the proof is based on the approach proposed in~\cite{ZuSIAM,ZGB}. However, unlike the above papers, we use continuous controls and  the {classical notion of  solutions} (Carath\'eodory solutions). Besides, we use more general assumptions on the vector fields and on the cost function.}\\
Let $\Delta\in(0,\infty]$ be such that  A1--A3  are satisfied in $ D=B_\Delta(x^*)$, {$\delta\in\Big(0,\sqrt[2m_1]{\tfrac{\gamma_1}{\gamma_2}}\Delta\Big)$ be fixed}, and let
\begin{equation}\label{nu}
  \nu=\max\limits_{t}\sum_{\underset{s=1,2}{i=1}}^n|u_{si}^\varepsilon(t)|=2\sqrt{2\pi}\varepsilon^{-1/2}\sum\limits_{i=1}^n\sqrt{{k_i}}.
\end{equation}
\textit{Step 1.}
{For an arbitrary  $\delta_0{\in}(\sqrt[2m_1]{\tfrac{\gamma_2}{\gamma_1}}\delta,\Delta)$,} denote
$$
 D_0=\overline{B_{\delta_0}(x^*)}{\subset} D{=}B_{\Delta}(x^*),\,M_F=\sup\limits_{\hspace{-1em}\underset{s=1,2,1\le i\le n}{x\in D_0}}|F_{si}(J(x))|.
$$
First of all, we specify an $\varepsilon_0>0$ such that, for each $\varepsilon\in(0,\varepsilon_0]$,  all solutions $x(t)$ of system~\eqref{int_n} with controls $u^\varepsilon(t)$~\eqref{cont_eps} and the initial conditions $x(0)=x^0\in D_0$ are well defined
on $t\in[0,\varepsilon]$.
We put $d=\Delta-\delta_0>0$ and see that \begin{equation}\label{eps0}
0<\varepsilon_{0}<\Big({2\sqrt{2\pi} L\sum\limits_{i=1}^n\sqrt{k_i}}\Big)^{-2}\ln^2\bigg(\tfrac{L d}{M_F}+1\bigg).
\end{equation}
Then by Lemma~\ref{lemma_x} with $m=0$,
$\|x(t)-x^0\|<d,
$
for each $\varepsilon\in(0,\varepsilon_0]$, $t\in[0,\varepsilon]$.
If $\Delta{=}{+}\infty$, then we take $d{=}{+}\infty$ and arbitrary $\varepsilon_{0}{\in}(0,\infty)$.
Hence,  all solutions $x(t)$ of system~\eqref{int_n} with the initial conditions $x^0{\in }D_0$ and controls $u_{si}^{\varepsilon}(t)$~\eqref{cont_eps} are in the set $D$ for $t\in[0,\varepsilon]$. {Without loss of generality, we assume $x(\varepsilon)\ne x^*$ (otherwise, similarly to the proof of Lemma~\ref{volterra}, we will have that $x(t)\equiv x^*$).} \\
 \textit{Step 2.} Let $J$  satisfy the conditions of the theorem. We introduce the level sets
$$
\mathcal L_c=\{x\in D:J(x)\le c\},
$$
and define
$
{c_0{=}\gamma_1\delta_0^{2m_1}.}
$
It is easy to see that  $\overline{B_{\delta}(x^*)}\subseteq \mathcal L_{c_0}\subseteq D_0$, and $\mathcal L_c\subseteq\mathcal L_{c_0}$ for all $c\le c_0$.\\
We begin with the proof of  assertion II of Theorem~3.
\begin{center}
{   \textit{Proof of exponential and asymptotic stability in the sense of Lyapunov}}
\end{center}
\textit{Step 3.II.} Applying Lemma~\ref{volterra} with formula~\eqref{class}  and the expressions for controls~\eqref{cont_eps}, the representation~\eqref{volt1}  may be written as
 \begin{align}
  x(\varepsilon)&=x^0+\frac{1}{2}\sum_{i=1}^n{[F_{1i}e_i,F_{2i}e_i](J(x^0))}\int\limits_0^\varepsilon\int\limits_0^\tau\big(u_{2i}(\tau)u_{1i}(\theta)-u_{1i}(\tau)u_{2i}(\theta)\big)d\theta d\tau+R(\varepsilon)\nonumber\\
   &=x^0{-}\varepsilon\sum_{i=1}^n\frac{\partial J(x^0)}{\partial x_i}F_{0i}(J(x^0))e_i+R(\varepsilon).\label{volt}
\end{align}
Let us estimate the remainder  $R(\varepsilon)$ in~\eqref{volt}.
Choosing $\varepsilon_1{=}\big(2\sqrt{2\pi}\sum_{i=1}^n\sqrt{k_i}\big)^{-2}$, we guarantee that $\nu\varepsilon\le1$ for all $\varepsilon\in(0,\varepsilon_1]$.
{Note that, as requested in A4, $2m_1m_3=m_1(m_2+1)>0$, $2m_1m_4=3m_1(1+m_2)-2\ge1$.}
Then Lemma~\ref{lemma_r}   with $m=2m_1m_3$, $\varpi=2m_1m_4$, $\tilde M=M\gamma_1^{m_3}$, $\tilde H=H\gamma_1^{m_4}$ implies the following estimate:
$$\|R(\varepsilon)\|{\le} (\varepsilon\nu)^3\|x^0{-}x^*\|^{2m_1m_4}C(\nu\varepsilon,\|x^0{-}x^*\|).$$
Define $\varepsilon_2{\in}(0,\min\{\varepsilon_0,\varepsilon_1,c_0^{-\tilde m}\})$ (for $\tilde m{=}0$ or $\Delta{<}\infty$, the $\varepsilon_2$ can be chosen independently of $c_0$.)
 Then, for $\varepsilon{\in}(0,\varepsilon_2]$, $x^0{\in}\mathcal L_{c_0}$,
\begin{equation}\label{estc0}
\varepsilon J^{\tilde m}(x^0)\le\varepsilon  c_0^{\tilde m}<1.
\end{equation}
Hence,  there is a constant $\Omega>0$ such that
$$
\|R(\varepsilon)\|\le\Omega J^{m_4}(x^0)\varepsilon^{3/2}=\Omega \varepsilon^{\tfrac{3}{2}} J^{\tfrac{3}{2}\tilde m+\frac{1}{2m_1}}(x^0),
$$
for all $\varepsilon\in(0,\varepsilon_2]$, $x^0\in\mathcal L_{c_0}$.
Indeed, from~\eqref{estc0},
 $$J^{m_3-\tfrac{1}{2m_1}}(x^0)=J^{\tfrac{\tilde m}{2}}(x^0)<\varepsilon^{-1/2};$$ then~\eqref{nu} implies that the constant  $C$ in Lemma~\ref{lemma_r} does not depend on $x^0$ and $\varepsilon$:
 \begin{align*}
 C
 \le \gamma_1^{-m_4/2m_1}2^{2m_1m_4{-}2}\tilde H\big(1{+}c_1\gamma_1^{1/2m_1-m_3}(\nu \sqrt{\varepsilon_2})^{2m_1m_4}\big).
 \end{align*}
 \\
\textit{Step 4.II.} Assume that assumption A4 hold, and fix $\bar\lambda\in(0,\alpha_1\kappa_1)$. On this step, {we will find an $\bar\varepsilon>0$ ensuring the following property: for any $\varepsilon\in(0,\bar\varepsilon]$, $\lambda\in(0,\bar
\lambda]$, the  solutions of system~\eqref{int_n} with the controls $u_{is}^\varepsilon(t)$ and initial conditions from $\mathcal L_{c_0}$ satisfy the  inequality:}
\begin{equation}\label{Jest1}
J(x(\varepsilon)){\le} J(x^0)\Big(1{-}\frac{\varepsilon\lambda}{m_1}J^{\tilde m}(x^0)\Big)^{m_1},
\end{equation}
Lemma~\ref{lemma_v} with $V{=}J$,  $m_1$ defined in A3, $m_2\ge \tfrac{1}{m_2}-1$, yields
\begin{equation}\label{Jest}
J(x(\varepsilon)){\le} J(x^0)\Big(1{-}\frac{\varepsilon\varkappa_1}{m_1}J^{\tilde m}(x^0){+}\frac{\varepsilon^2\varkappa_2}{2m_1^2}J^{2\tilde m}(x^0)\Big)^{m_1},
\end{equation}
where $\tilde m {=} 1{+}m_2{-}\tfrac{1}{m_1}$, and $\varkappa_1,\varkappa_2$ are defined in Lemma~\ref{lemma_v}.
 Estimating $\varkappa_1,\varkappa_2$ and taking into account~\eqref{estc0}, we obtain
$$
\begin{aligned}
&J(x(\varepsilon))\le J(x^0)\Big(1- \tfrac{\varepsilon J^{\tilde m}(x^0)}{m_1}\big(\alpha_1\kappa_1-\sqrt{\varepsilon}\lambda_1c_0^{\tilde m/2}\big)\Big)^{m_1},\\
&\lambda_1=\Omega\sqrt{\kappa_2 }+\tfrac{1}{2m_1} ((m_1-1)\kappa_2+\mu m_1)\bigg(\alpha_2\sqrt{\kappa_2}+\Omega\bigg)^2.
\end{aligned}
$$
Recall that $\bar\lambda\in(0,\alpha_1\kappa_1)$ and put $\varepsilon_4\in\Big(0,\Big(\tfrac{\alpha_1\kappa_1-\bar\lambda}{c_0^{\tilde m/2}\lambda_1}\Big)^2\Big]$.
 Defining $\bar\varepsilon{=}\min\Big\{\varepsilon_3,\varepsilon_4,{\tfrac{m_1}{\bar\lambda c_0^{\tilde m}}}\Big\}$,  we conclude that
$
  \tfrac{\varepsilon J^{\tilde m}(x^0)\lambda}{m_1}{<}1
$
and~\eqref{Jest1} is satisfied for any $\varepsilon{\in}(0,\bar\varepsilon]$, $\lambda{\in}(0,\bar\lambda]$, $x^0{\in} \mathcal L_{c_0}{\subseteq} D_0$. Note that the above $\bar\varepsilon$ may be chosen independently of $x^0\in B_\delta(x^*)$.
\\
\textit{Step 5.II.} The next step is to estimate $\|x(t)-x^*\|$ for $t=\varepsilon,2\varepsilon,\dots$.\\
Let $\tilde m=0$. Since $\tfrac{\varepsilon\lambda}{m_1}<1$, estimate~\eqref{Jest1} can be rewritten as $J(x(\varepsilon))\le J(x^0)\Big(e^{-\tfrac{\lambda}{m_1}\varepsilon}\Big)^{m_1}$. Iterating the obtained inequality for $x^0\in\mathcal
L_{c_0}$,
 we get
 $
 J(x(t))\le J(x^0)e^{-\lambda{t}}\text{ for }t=0,\varepsilon,2\varepsilon,\dots.
 $\\
For $\tilde m>0$, recall that 
$\tfrac{\varepsilon J(x^0)^{\tilde m}\lambda}{m_1}<1$, and, additionally, require  $\varepsilon\in(0,({\bar\lambda \tilde m c_0^{\tilde m}})^{-1})$. Then we may rewrite~\eqref{Jest1} as
\begin{align*}
J(x(\varepsilon))&{\le} J(x^0)\Big(1{-}\tfrac{\varepsilon\lambda}{m_1}J^{\tilde m}(x^0)\Big)^{m_1}\\
&{\le}{J(x^0)\Big(1{+}{\tilde m} {\varepsilon\lambda}J^{\tilde m}(x^0)\Big)^{{-}\tfrac{1}{\tilde m}}.}
\end{align*}
Iterating the obtained inequality for $x^0\in\mathcal L_{c_0}$,
 we get
 $$
 J(x(t))\le J(x^0)\Big(1{+}{t\lambda}{\tilde m}J^{\tilde m}(x^0)\Big)^{-\tfrac{1}{\tilde m}}\text{ for }t=0,\varepsilon,2\varepsilon,\dots.
 $$
 Note that, since   $J(x(\varepsilon))\le J(x^0)$,
 the same $\lambda$, $c_0$, $\varepsilon$ can be chosen for all $x^0\in D_0$.\\
 Thus, if $x^0\in B_\delta(x^*)\subseteq\mathcal L_{c_0}$, $\varepsilon\in(0,\bar\varepsilon]$, and $u_{si}^\varepsilon$ are given by~\eqref{cont_eps}, then the corresponding solution of~\eqref{int_n} is well defined in $D$:
$
x(n\varepsilon)\in\mathcal L_{c_0}\subseteq D_0
$
 for $n=0,1,2,\dots,$ and  due to the choice of $d,\varepsilon$ and Lemma~\ref{lemma_x}: $x(t)\in D$ for all $t\ge0$.
Combining the above results with A3, we conclude that
\begin{align*}
  \|x(t)-x^*\|\le \sqrt[2m_1]{\tfrac{\gamma_2}{\gamma_1}}\|x^0-x^*\|\varphi_{\tilde m}(\lambda t),\text{ for }t=0,\varepsilon,2\varepsilon,\dots,
\end{align*}
\textit{Step 6.II.} Finally, it remains to prove the exponential (or power) decay rate of $\|x(t)-x^*\|$ for all $t\ge 0$.\\
For any $t\ge 0$  denote the integer part of $\frac{t}{\varepsilon}$ as $ t_{in}^\varepsilon$, and note that  $0\le t-t_{in}^\varepsilon{\varepsilon}<\varepsilon$. By using the triangle inequality, the results of 4.II, and Lemma~\ref{lemma_x} with
$m=m_1(1+m_2)$, we obtain
$$
\begin{aligned}
  &\|x(t){-}x^*\|=\|x(t){-}x^*{-}x(t_{in}^\varepsilon\varepsilon){+}x(t_{in}^\varepsilon\varepsilon)\|\\
  &\le\|x(t_{in}^\varepsilon\varepsilon){-}x^*\|{+}\|x(t){-}x(t_{in}^\varepsilon\varepsilon)\|\\
 & \le \|x(t_{in}^\varepsilon\varepsilon){-}x^*\|+\frac{ M}{ L}\|x(t_{in}^\varepsilon\varepsilon){-}x^*\|^{m_1(1+m_2)}(e^{\nu L\varepsilon}{-}1)\\
 &\le\sqrt[2m_1]{\tfrac{\gamma_2}{\gamma_1}}\|x^0-x^*\|\varphi_{\tilde m}({\lambda t_{in}^\varepsilon\varepsilon}) \Big(1+\frac{ M}{ L}\Big(\tfrac{\gamma_2}{\gamma_1}\Big)^{\tilde m/2}(\|x^0-x^*\|\varphi_{\tilde m}({\lambda t_{in}^\varepsilon\varepsilon}))^{m_1\tilde m}(e^{\nu L\varepsilon}{-}1)\Big)\\
 &:=\sigma(t)\sqrt[2m_1]{\tfrac{\gamma_2}{\gamma_1}}\|x^0-x^*\|\varphi_{\tilde m}(\lambda{t_{in}^\varepsilon\varepsilon}),
 \end{aligned}
$$
This proves the second assertion of Theorem~3.
 \begin{center}
  \textit{{Proof of practical exponential and asymptotic stability}}
   \end{center}
\textit{Step 3.I.}  Assume that A4 is not satisfied,  $F_{0i}(J(x))>\alpha$ in $D$. We fix a $\rho\in(0,\delta)$, and suppose that $x^0\in B_{\delta}(x^*)\setminus B_{\rho}(x^*)\subset \mathcal L_{c_0}$.   The case $x^0\in\overline{B_{\rho}(x^*)}$ will be
covered on Step 4.I.
For  any $\rho_0{\in}(0,\rho)$, $\rho_{\min}{\in}(0,\rho_0)$, let $\tilde d={\min\{\rho-\rho_0,\rho_0-\rho_{\min}\}}$, and
$
0<\tilde\varepsilon_{0}<\Big({2\sqrt{2\pi} L\sum\limits_{i=1}^n\sqrt{k_i}}\Big)^{-2}\ln^2\Big(\tfrac{L \tilde d}{M}+1\Big).
$
Then:\\
P1 $x^0\in B_{\rho_0}(x^*)\Rightarrow x(t)\in B_{\rho}(x^*)\text{ for }t\in[0,\varepsilon];$\\
P2 $x^0\in B_{\rho}(x^*)\setminus B_{\rho_0}(x^*)\Rightarrow \|x(t)-x^*\|>0\text{ for }t\in[0,\varepsilon].$\\
From~P2, representation~\eqref{volt} holds for all $x^0 {\in} B_{\delta}(x^*){\setminus} B_{\rho_0}(x^*).$ 
\textit{Step 4.I.}
Applying Lemma~\ref{lemma_v} with $m_2{=}0$, Lemma~\ref{lemma_r} with $$m{=}0, \varpi{=}0, \tilde H=\max_{\underset{i,j,l=\overline{1,n}}{x\in D_0; s,p,q=1,2;}}\|L_{F_{ql}}L_{F_{pj}}F_{si}(J(x))\|,$$ and taking into account
$J(x^0)\ge\gamma_1\rho_0^{2m_1}>0$, we conclude that, for all $\varepsilon\in(0,\min\{\tilde\varepsilon_0,\varepsilon_1,\varepsilon_2\}]$
\begin{align*}
&\|R(\varepsilon)\|\le \varepsilon^{3/2}H\big(2\sqrt{2\pi}\sum_{i=1}^n\sqrt{k_i}\big)^{3}:=\varepsilon^{3/2}\tilde\Omega,\\
&J(x(\varepsilon))\le J(x^0)\Big(1- \tfrac{\varepsilon J^{\tilde m}(x^0)}{m_1}\big(\alpha\kappa_1-\sqrt{\varepsilon}c_0^{\tilde m/2}\tilde\lambda_1\big)\Big)^{m_1},\\
&\tilde\lambda_1={\sqrt{\kappa_2}\tilde\Omega}{\gamma_1^{\tfrac{1}{2m_1}-1}\rho_0^{1-2m_1}}+((m_1-1)\kappa_2+\mu m_1)\Big(\alpha\sqrt{\kappa_2}+{\sqrt\varepsilon\tilde\Omega}{\gamma_1^{\tfrac{1}{2m_1}-1}\rho_0^{1-2m_1}}\Big)^2.
\end{align*}
Similarly to Step 4.II, we may define an $\bar\varepsilon>0$ ensuring the following property: for any $\varepsilon\in(0,\bar\varepsilon]$, $\lambda\in(0,\bar\lambda]$, the property~$\tfrac{\varepsilon J(x^0)\lambda}{m_1}<1$ holds, and  the  solutions of
system~\eqref{int_n} with the controls $u_{is}^\varepsilon(t)$ and initial conditions from $B_{\delta}(x^*)\setminus B_{\rho}(x^*)$ satisfy the  inequality~\eqref{Jest1}.
\\
\textit{Step 5.I.} Following the argumentation from Step 5.II,  we conclude that there is an $N\ge0$ such that  $\|x(n\varepsilon)-x^*\|>\rho_0$ for $n=0,1,\dots,N-1$, $\|x(N\varepsilon)-x^*\|\le\rho_0$. Namely, {$N\ge
-\tfrac{1}{\bar\lambda\bar\varepsilon}\ln\tfrac{\rho_0}{\sigma\delta}$} if $m_1=1$, and {$N\ge 1-  \tfrac{m_1}{\bar\lambda\sigma_2\bar\varepsilon}(\sigma_1\delta^{-2m_1\tilde m}-\rho_0^{-2m_1\tilde m})$} if $m_1>1$.
Since $\|x(n\varepsilon)-x^*\|>\rho_0$ for $n=0,1,\dots,N-1$, representation~\eqref{volt} remains valid for all $n=0,1,\dots,N-1$ because of~P2.
Furthermore, similar to 5.II, we apply Lemma~\ref{lemma_x} with $m=0$, $\varepsilon\in(0,\min\{\bar \varepsilon, \tilde\varepsilon_0\})$, for $t<N\varepsilon$:
\begin{align}\label{practest}
\|x(t){-}x^*\|&\le  \sqrt[2m_1]{\tfrac{\gamma_2}{\gamma_1}}\|x^0-x^*\|\varphi_{\tilde m}({\lambda t_{in}^\varepsilon\varepsilon}) +\rho-\rho_0\\
  &\le  \sqrt[2m_1]{\tfrac{\gamma_2}{\gamma_1}}\|x^0-x^*\|\varphi_{\tilde m}({\lambda (t-\varepsilon)}) +\rho-\rho_0,\nonumber
 \end{align}
 where $\varphi_{\tilde m}(\lambda t)$ is given by~\eqref{varphi} with $\sigma=1$.
  From~P1, $x(t)\in B_{\rho}(x^*)$ for all $t\in[N\varepsilon,(N+1)\varepsilon]$. Thus, there are two cases:\\
   i) if $x((N+1)\varepsilon)\in B_{\delta}(x^*)\setminus B_{\rho_0}(x^*)$, we conclude  that~\eqref{practest} holds for all $t\le (N+1)\varepsilon$;\\
   ii) if $x((N+1)\varepsilon)\in  B_{\rho_0}(x^*)$, the property~P1 yields $x(t)\in B_{\rho}(x^*)$ for all $t\in[(N+1)\varepsilon,(N+2)\varepsilon]$. \\
    Repeating i),ii), we prove that $x(t)\in B_\rho(x^*)$ for all $t\ge N\varepsilon$. Recall that
 $\varphi_{\tilde m}({t_{in}^\varepsilon\varepsilon})$ is a positive decreasing function, and $\varphi_{\tilde m}({t_{in}^\varepsilon\varepsilon})\le \rho_0$, for all $t\ge (N+1)\varepsilon$.
 \\
 {Combining this with~\eqref{practest}, we complete the proof of Theorem~3.}
\hspace{18em}  $\square$
\end{appendix}

\end{document}